\theoremstyle{definition}
\newtheorem{Def}{Definition}
\newtheorem{Ex}{Example}
\newtheorem{Cor}{Corollary}
\newtheorem{Prop}{Proposition}
\newtheorem{Thm}{Theorem}
\begin{document}
\title[Round fold maps on linear and general bundles]{Round fold maps on manifolds regarded as the total spaces of linear and more general bundles}
\author{Naoki Kitazawa}
\address{Department of Mathematics, Tokyo Institute of Technology, 
2-12-1 Ookayama, Meguro-ku, Tokyo 152-8551, JAPAN, Tel +81-(0)3-5734-2205, 
Fax +81-(0)3-5734-2738,
}
\email{kitazawa.n.aa@m.titech.ac.jp}
\maketitle

\begin{abstract}
({\it Stable}) {\it fold} maps are fundamental tools in studying a generalized theory of the
 theory of Morse functions on
 smooth manifolds and its application to geometry of the manifolds. It is
 important to construct explicit fold
 maps systematically to study smooth manifolds by the theory of fold maps easy to handle. However,
 such constructions have been difficult in general.

\ \ \ {\it Round} fold maps are defined as stable fold maps such that the sets of all the singular values are
 concentric spheres and it was first
 introduced in 2012--2014. The author studied algebraic and differential topological properties
 of such maps and their manifolds and constructed explicit round fold maps. For example, the
 author succeeded in constructing such maps on manifolds regarded as the total spaces of bundles
 over smooth homotopy spheres by noticing that smooth homotopy spheres admit round fold maps
 whose singular sets are connected and more generally, new such maps on manifolds regerded as the total space of circle bundles
 over another manifold admitting a round fold map. In this paper, as an advanced work, we construct new explicit round fold maps on manifolds regarded as the total
 spaces of bundles such that the fibers
 are closed smooth manifolds and that the structure groups are linear and more general bundles over a manifold admitting a round fold map .    
\end{abstract}

\keywords{Singularities of differentiable maps; singular sets, fold maps. \and Differential topology.}
\subjclass{57R45 \and 57N15}

\section{Introduction}
\label{sec:1}
{\it Fold maps} are fundamental tools in studying a generalization of the theory of Morse
 functions and its application to geometry of manifolds.

 A {\it fold map} is defined as a smooth map
 such that each singular point is of the form
$$(x_1, \cdots, x_m) \mapsto (x_1,\cdots,x_{n-1},\sum_{k=n}^{m-i}{x_k}^2-\sum_{k=m-i+1}^{m}{x_k}^2)$$ for two positive integers $m \geq n$ and an integer $0 \leq i \leq m-n+1$ and a Morse function
 is regarded as a fold map, for example.  For a fold map from a closed smooth manifold of dimension $m$ into a smooth
 manifold of dimension $n$ without boundary, the following two hold.
\begin{enumerate}
\item The set of all the singular points (the {\it singular set}) is a closed smooth submanifold of
 dimension $n-1$ of the source manifold. 
\item The restriction map to the singular set is an immersion of codimension $1$.
\end{enumerate}

We also note that if the restriction map to the singular set is an immersion with normal
 crossings, then it is {\it stable} (stable maps are important in the theory of global singularity; see \cite{golubitskyguillemin} for example).

  Constructions of explicit fold maps will help us to
 study smooth manifolds by using the theory of fold maps which are easy to handle and it is very difficult to construct explicit fold maps in general, although
 existence problems for fold maps have been solved under various conditions. However, such fold maps with good properties were
 constructed as we will introduce the following.  

\ \ \ In \cite{burletderham}, \cite{furuyaporto}, \cite{saeki2}, \cite{saekisakuma} and \cite{sakuma}, {\it special
 generic} maps, which are fold maps whose singular points are of the form
$$(x_1, \cdots, x_m) \mapsto (x_1,\cdots,x_{n-1},\sum_{k=n}^{m}{x_k}^2)$$ for two positive integers $m \geq n$, were
 studied. Special generic maps are not so difficult to construct. They were constructed by
 constructing local $C^{\infty}$ maps on manifolds with boundaries and gluing
 them together. For example, by such methods, some special generic
 maps on homotopy spheres including standard spheres are obtained. Furthermore, manifolds admitting special generic
 maps were classified under restrictions on the dimensions of source and
 target manifolds and the fundamental groups of source manifolds. 

\ \ \ Later, in \cite{kitazawa} and \cite{kitazawa2} the
 author introduced {\it round} fold maps, which will be mainly studied in this paper. A {\it round} fold
 map is defined as a fold
 map satisfying the following three.
\begin{enumerate}
\item The singular set is a disjoint union of standard spheres.
\item The restriction to the singular set is an embedding.
\item The set of all the singular values is a disjoint union of spheres embedded concentrically. 
\end{enumerate}

For example, some special generic maps on homotopy spheres are round fold maps whose singular sets are connected (see Example \ref{ex:1} (\ref{ex:1.1}) later and also \cite{saeki2}).

\ \ \ In \cite{kitazawa2}, homology groups and homotopy groups of manifolds admitting round fold maps were studied. Some examples of round fold maps
 and the diffeomorphism types of their source manifolds were
 given by the author in \cite{kitazawa}, \cite{kitazawa3}, \cite{kitazawa4} and \cite{kitazawa5}. For example, we have
 obtained round fold maps on manifolds admitting bundle structures over the $n$-dimensional ($n \geq 2$) standard sphere and manifolds represented as connected
 sums of manifolds admitting bundle structures over the $n$-dimensional ($n \geq 2$) standard sphere whose fibers are diffemorphic to the ($m-n$)-dimensional standard sphere $S^{m-n}$ ($m>n$). In \cite{kitazawa4} and \cite{kitazawa5}, as
 new answers, we have obtained new round fold maps on closed manifolds admitting bundle structures over (exotic) homotopy spheres or
 ones over more general manifolds.

\ \ \ In the last two papers, as a useful tool to construct new round fold maps, a {\it P-operation} has been introduced. Especially, in these papers, a lot of round fold maps from manifolds having
 bundle structures such that the fibers are circles were obtained. In this paper, as a generalized work of \cite{kitazawa4} and \cite{kitazawa5}, we apply P-operations to construct more explicit round fold maps
 on manifolds having bundle structures such that the structure groups are {\it linear} and act on the fibers smoothly. This paper is organized as the following.

\ \ \ In section \ref{sec:2}, we recall {\it round} fold maps and some terminologies
 on round fold maps such as {\it axes} and {\it proper cores}. We also recall a {\it $C^{\infty}$ trivial} round fold map. We introduce results
 on the diffeomorphism types of manifolds admitting $C^{\infty}$ trivial round fold maps shown by the author in \cite{kitazawa} and \cite{kitazawa3}.

\ \ \ In section \ref{sec:3}, we recall a {\it locally $C^{\infty}$ trivial} round fold map, which is a round fold map satisfying
 a kind of triviality around the connected components of the set of singular values. We recall {\it P-operations} defined in \cite{kitazawa4}, which are operations used to construct new round fold maps
 on manifolds having the structures of manifolds admitting bundle structures over manifolds admitting locally $C^{\infty}$ trivial
 round fold maps. More preisely, a P-operation consists of four steps; we decompose the given round fold map, confirm that the restrictions of the bundle over
 the given manifold to the obtained pieces of the source manifold of the round fold map are trivial, construct maps on these pieces and glue these maps together. A construction
 of a round fold map by a P-operation requires us that the bundle is not so complex.  

\ \ \ In section \ref{sec:4}, as main works of the present paper, we apply P-operations
 to construct new round fold maps on manifolds regarded as the total spaces of bundles whose fibers are closed smooth manifolds and whose structure groups are linear and act on the fibers smoothly ({\it linear} bundles) and more general bundles
 over manifolds admitting locally $C^{\infty}$ trivial round
 fold maps. These works
 are regarded as extensions of works \cite{kitazawa3} and \cite{kitazawa5} by the author. In these works, we
 mainly consider bundles whose fibers are circles and we have obtained a lot of new round fold maps and manifolds. In these works, the theory
 of the classification of circle bundles, which is the most fundamental part of
 the theory of characteristic classes of vector bundles discussed in \cite{milnorstasheff}. In the present paper, we consider some appropriate situations and obtain
 new round fold maps and their source manifolds through Theorems \ref{thm:1}-\ref{thm:8} with Examples \ref{ex:2}-\ref{ex:8}. For such general studies, as
 an essential tool, we use more general theory of characteristic classes
 of linear bundles of \cite{milnorstasheff}.  

\ \ \ Throughout this paper, manifolds and maps between manifolds are smooth and of class $C^{\infty}$ unless
 otherwise stated. The base spaces and fibers of bundles in this paper are smooth manifolds and the structure groups of the bundles act
 on the fibers smoothly unless otherwise stated.

\ \ \ Moreover, let $M$ be a closed (smooth) manifold of dimension $m$, let $N$ be a (smooth) manifold
 of dimension $n$ with no boundary, let $f:M \rightarrow N$ be a (smooth) map and let $m \geq n \geq 1$.
 We denote the {\it singular set} of $f$, which is defined as the set consisting of all the singular points of $f$, by $S(f)$. We call
 the set $f(S(f))$ the {\it singular value set} of $f$. We call an inverse image $f^{-1}(p) \in M$ a {\it fiber} of $f$ and if the point
 $p \in N$ is a regular value of $f$, then we call it a {\it regular fiber} of $f$.    

\section{Round fold maps}
\label{sec:2}

In this section, we review round fold maps. See also \cite{kitazawa2}.

\subsection{Terminologies on round fold maps}

\begin{Def}[round fold maps (\cite{kitazawa2})]
\label{def:1}
$f:M \rightarrow {\mathbb{R}}^n$ ($m \geq n \geq 2$) is said to be a {\it round} fold map if $f$ is $C^{\infty}$ equivalent to
 a fold map $f_0:M_0 \rightarrow {\mathbb{R}}^n$ on a closed $C^{\infty}$ manifold $M_0$ such that the following three hold.

\begin{enumerate}
\item The singular set $S(f_0)$ is a disjoint union of ($n-1$)-dimensional standard spheres and consists of $l \in \mathbb{N}$ connected components.
\item The restriction map $f_0 {\mid}_{S(f_0)}$ is an embedding.
\item Let ${D^n}_r:=\{(x_1,\cdots,x_n) \in {\mathbb{R}}^n \mid {\sum}_{k=1}^{n}{x_k}^2 \leq r \}$. Then
 the set $f_0(S(f_0))$ is represented as the disjoint union ${\sqcup}_{k=1}^{l} \partial {D^n}_k$.  
\end{enumerate}

We call $f_0$ a {\it normal form} of $f$. We call a ray $L$ from $0 \in {\mathbb{R}}^n$ an {\it axis} of $f_0$ and
 ${D^n}_{\frac{1}{2}}$ the {\it proper core} of $f_0$. Suppose that for a round fold map $f$, its normal form $f_0$ and diffeomorphisms
 $\Phi:M \rightarrow M_0$ and $\phi:{\mathbb{R}}^n \rightarrow {\mathbb{R}}^n$, $\phi \circ f=f_0 \circ \Phi$. Then
 for an axis $L$ of $f_0$, we also call ${\phi}^{-1}(L)$ an {\it axis} of $f$ and for the proper core ${D^n}_{\frac{1}{2}}$ of $f_0$, we
 also call ${\phi}^{-1}({D^n}_{\frac{1}{2}})$ a {\it proper core} of $f$. 
\end{Def}

Let $f:M \rightarrow {\mathbb{R}}^n$ be a round fold map,let $P$ be a proper core of $f$ and let $L$ be
 an axis of the map $f$. Then, $f^{-1}({\mathbb{R}}^n-{\rm Int} P)$ has a bundle structure over $\partial P$ such that
 the fiber is diffeomorphic to the manifold $f^{-1}(L)$ and that $f {\mid}_{f^{-1}(\partial P)}:f^{-1}(\partial P) \rightarrow \partial P$ defines
 a subbundle of the previous bundle $f^{-1}({\mathbb{R}}^n-{\rm Int} P)$. In this situation, we
 can define a {\it $C^{\infty}$ trivial} round fold map.

\begin{Def}
\label{def:2}
In this situation, a round fold map $f$ is said to be {\it $C^{\infty}$ trivial} if we can take the bundle $f^{-1}({\mathbb{R}}^n-{\rm Int} P)$ as a trivial bundle. 
\end{Def}

We introduce some known examples of round fold maps with their source manifolds.

\begin{Ex}
\label{ex:1}
\begin{enumerate}
\item
\label{ex:1.1}
Special generic maps from $m$-dimensional homotopy spheres into the Euclidean space of dimension $n \geq 2$ ($m \geq n$) such that the singular
 sets are spheres and that the singular value sets are embedded spheres are round fold maps. They are $C^{\infty}$ trivial. The $m$-dimensional standard sphere $S^m$ admits such a round fold map into ${\mathbb{R}}^n$.   
In section 5 of \cite{saeki2}, such a fold map from $m$-dimensional ($2 \leq m <4$, $m \geq 5$) homotopy sphere into ${\mathbb{R}}^2$ is constructed.
\item
\label{ex:1.2}
Let $F \neq \emptyset$ be a closed manifold. Let $M$ be a closed manifold of dimension $m$ regarded as the total space of an $F$-bundle
 over $S^n$ {\rm (}$m \geq n \geq 2${\rm )}. In \cite{kitazawa}, in the case where $F$ is the standard sphere $S^{m-n}$, a round fold map $f:M \rightarrow {\mathbb{R}}^n$ such that the following four hold has been constructed
 and it has been shown that any manifold admitting such a map is regarded as the total space of an $S^{m-n}$-bundle over $S^n$.
\begin{enumerate}
\item $f$ is $C^{\infty}$ trivial.
\item The singular set $S(f)$ has two connected components.
\item For an axis $L$ of $f$, $f^{-1}(L)$ is diffeomorphic to the cylinder $F \times [-1,1]=S^{m-n} \times [-1,1]$.
\item Two connected components of the fiber of a point in a proper core of $f$ is regarded as fibers of the $F$-bundle
 over $S^n$.
\end{enumerate}
In \cite{kitazawa4}, a round fold map $f:M \rightarrow {\mathbb{R}}^n$ satisfying all the conditions but the second condition just before has
 been constructed and it has been shown that any manifold admitting such a map is regarded as the total space of an $F$-bundle over $S^n$. 
\item
\label{ex:1.3}
A special generic map $f$ from an $m$-dimensional closed manifold $M$ into ${\mathbb{R}}^n$ such that the following two hold
 appears in \cite{saeki2}, for example.
\begin{enumerate}
\item The restriction map $f {\mid}_{S(f)}$ is an embedding.
\item $S(f)$ is a disjoint union of two copies of the ($n-1$)-dimensional standard sphere.
\end{enumerate}
For example, the product of $S^{n-1}$ and an ($m-n+1$)-dimensional homotopy sphere admits such a map which is also $C^{\infty}$ trivial.
\end{enumerate}
\end{Ex}

\section{Locally $C^{\infty}$ trivial round fold maps and P-operations}
\label{sec:3}
We recall {\it locally $C^{\infty}$ trivial} round
 fold maps and {\it P-operations}. See also \cite{kitazawa3} and \cite{kitazawa5}.

\begin{Def}
\label{def:3}
Let $f:M \rightarrow {\mathbb{R}}^n$ be a round fold map. Assume that for any connected component $C$ of $f(S(f))$ and a small closed tubular neighborhood $N(C)$ of $C$ such that $\partial N(C)$ is the disjoint
 union of two connected components $C_1$ and $C_2$, $f^{-1}(N(C))$ has the structure
 of a trivial bundle over $C_1$ or $C_2$ and $f {\mid}_{f^{-1}(C_1)}:f^{-1}(C_1) \rightarrow C_1$ and $f {\mid}_{f^{-1}(C_2)}:f^{-1}(C_2) \rightarrow C_2$ give the structures of
 subbundles of the bundle $f^{-1}(N(C))$. Then $f$ is said to be {\it locally $C^{\infty}$ trivial}. We call the fiber $F_C$ of the bundle $f^{-1}(N(C))$ the {\it normal fiber of $C$ corresponding to the bundle $f^{-1}(N(C))$}. Assume that $C_1$ is in the bounded connected
 component of ${\mathbb{R}}^n-C_2$ and we denote the fiber of the subbundle $f^{-1}(C_1)$ by ${\partial}_0 F_C$.
\end{Def}

Maps in Example \ref{ex:1} (\ref{ex:1.1}) are locally $C^{\infty}$ trivial. Ones in Example \ref{ex:1} (\ref{ex:1.2}) are constructed as
 locally $C^{\infty}$ trivial maps in \cite{kitazawa} and \cite{kitazawa4}. We can construct maps on the product of the ($n-1$)-dimensional standard sphere
 and a homotopy sphere explained in the
 last part of Example \ref{ex:1} (\ref{ex:1.3}) as locally $C^{\infty}$ trivial maps. 

We can construct a locally $C^{\infty}$ trivial round fold map in the following method. We use the method in the proof of Proposition \ref{prop:2} and other
 scenes of the present paper.

 Let $l^{\prime} \in \mathbb{N}$. Let $\{E_j\}_{j=1}^{l^{\prime}}$ be a family of compact manifold of dimension $m-n+1$ such that the boudary of $E_j$ is
 a disjoint union of two closed manifolds $F_j$ and $F_{j+1}$, that $F_{l^{\prime}+1}$ is empty and that except $F_0$ and $F_{l^{\prime}+1}$, all the
 manifolds in the family $\{F_j\}$ are non-empty. There exist a positive integer $l$ and a sequence of integers $\{k_j\}_{j=1}^{l^{\prime}}$ of integers such that $k_1=1$ and $k_{l^{\prime}}=l$ hold
 and that the inequality $k_j<k_{j+1}$ holds for any integer $1 \leq j \leq l^{\prime}-1$. We can construct a Morse function $\tilde{f_j}:E_j \rightarrow [k_j-\frac{1}{2},k_{j+1}-\frac{1}{2}]$ satisfying
 the following three.

\begin{enumerate}
\item For any integer $1 \leq j \leq l^{\prime}$, On $F_j$, $\tilde{f_j}$ is constant and minimal if $F_j$ is non-emppty
 and on $F_{j+1}$, $\tilde{f_j}$ is constant and maximal if $F_{j+1}$ is non-empty.  
\item The minimum of $\tilde{f_j}$ is $k_j-\frac{1}{2}$ if $F_j$ is non-empty. If $F_j$ is empty, then by
 the assumption, $j=1$ holds and in this case, the minimum of $\tilde{f_1}$ is $1$. The maximum of $\tilde{f_j}$ is $k_{j+1}-\frac{1}{2}$ if $j \neq l^{\prime}$ holds
 and an integer $l$ larger than $k_{l^{\prime}}$ if $j=l^{\prime}$ holds. respectively. The
 image $\tilde{f_j}({\rm Int} E_j)$ of the interior ${\rm Int} E_j$ of $E_j$ is the
 open interval $(k_j-\frac{1}{2},k_{j+1}-\frac{1}{2})$.
\item Singular points of $\tilde{f_j}$ are always in the interior ${\rm Int} E_j$ of $E_j$ and at distinct singular
 points, the values are always distinct. Furthermore, the set of all the singular values consists
 of all the integers larger than $k_j-\frac{1}{2}$ and smaller than $k_{j+1}-\frac{1}{2}$ if $j \neq l^{\prime}$ holds and all the integers larger
 than $k_j-\frac{1}{2}$ and not larger than $l$ if $j = l^{\prime}$ holds. 
\end{enumerate}

We obtain a family
 of maps $\{\tilde{f_j} \times {\rm id}_{S^{n-1}}:E_j \times S^{n-1} \rightarrow [k_j-\frac{1}{2},k_{j+1}-\frac{1}{2}] \times S^{n-1}\}_{j=1}^{l^{\prime}-1}$. 

\ \ \ If $F_1$ is non-empty, then by gluing the family of maps and the projection $p:{D^n}_{\frac{1}{2}} \times F_1 \rightarrow {D^n}_{\frac{1}{2}}$
 together properly, we obtain a desired round fold map; for a non-negative real number $t$, we
 regard $\{t\} \times S^{n-1}$ as $\partial {D^n}_t$ by identifying $(t,x) \in \{t\} \times S^{n-1}$
 with $(\frac{t}{|x|}x) \in {D^n}_t$. If $F_1$ is empty, then by gluing the family $\{\tilde{f_j} \times {\rm id}_{S^{n-1}}:E_j \times S^{n-1} \rightarrow [k_j-\frac{1}{2},k_{j+1}-\frac{1}{2}] \times S^{n-1}\}_{j=1}^{l^{\prime}-1}$ of
 maps, we obtain a desired round fold map similarly.

We call such a construction a {\it locally trivial spinning construction}.

The following proposition has been shown in \cite{kitazawa} and also in \cite{kitazawa4}.

\begin{Prop}[\cite{kitazawa}]
\label{prop:1}
Let $m,n \in \mathbb{N}$, $n \geq 2$ and $m \geq 2n$.
Any manifold represented as a connected sum of $l \in \mathbb{N}$ closed manifolds regarded as the total spaces of $S^{m-n}$-bundles over $S^n$ admits
 a locally $C^{\infty}$ trivial round fold map $f$ into ${\mathbb{R}}^n$ such that the following four hold.
\begin{enumerate}
\item All the regular fibers of $f$ are disjoint unions of finite copies of $S^{m-n}$.
\item The number of connected components of $S(f)$ and the number of connected components of the fiber of a point in a proper core of $f$ are $l+1$.
\item For any connected component $C$ of $f(S(f))$ and a small closed tubular neighborhood $N(C)$ of $C$, $f^{-1}(N(C))$ is regarded as the total space
 of a trivial bundle over $C$ as in Definition \ref{def:3} such that a normal
 fiber $F_C$ of $C$ corresponding to the bundle $f^{-1}(N(C))$ is diffeomorphic to a disjoint union of a finite number of the following two manifolds.
\begin{enumerate}
\item $D^{m-n+1}$.
\item $S^{m-n+1}$ with the interior
 of a union of disjoint three {\rm (}$m-n+1${\rm )}-dimensional standard
 closed discs removed.
\end{enumerate}
\item All the connected components of the fiber of a point in a proper core of $f$ are regarded as fibers of the $S^{m-n}$-bundles
 over $S^n$ and a fiber of any $S^{m-n}$-bundle over $S^n$ appeared in the connected sum is regarded as a connected component of the
 fiber of a point in a proper core of $f$.  
\end{enumerate}
\end{Prop}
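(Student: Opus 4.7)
The plan is to apply the locally trivial spinning construction introduced just above, with a cobordism family $\{E_j\}$ tailored so that the resulting round fold map satisfies all four conditions and so that the source manifold is diffeomorphic to the prescribed connected sum.

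Set $l'=l+1$ and $k_j=j$, so that the singular value set will consist of $l+1$ concentric spheres at integer radii. Take $F_1$ to be a disjoint union of $l+1$ copies of $S^{m-n}$; this will be the fiber over the proper core. For $1\le j\le l$, let $E_j$ be the disjoint union of some cylinders $S^{m-n}\times [j-\tfrac12,j+\tfrac12]$ and one copy of the pair of pants of type~(b) in condition~(3) (i.e.\ $S^{m-n+1}$ with three disjoint open discs removed), carrying a Morse function whose unique critical point merges two $S^{m-n}$ components; this yields $F_{j+1}=(l+1-j)\cdot S^{m-n}$. Finally take $E_{l+1}=D^{m-n+1}$, whose Morse function caps off the last sphere so that $F_{l+2}=\emptyset$. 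Spinning the Morse functions $\tilde f_j$ over $S^{n-1}$ and gluing in the projection $p:D^n_{1/2}\times F_1\to D^n_{1/2}$ on the proper core produces a round fold map $f:M'\to\mathbb{R}^n$.

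Conditions~(1), (2) and (3) of the proposition are then essentially immediate from the construction: every regular fiber is some $F_j$, which is a disjoint union of copies of $S^{m-n}$; the singular value set is $\bigsqcup_{j=1}^{l+1}\partial D^n_j$ and the proper core fiber $F_1$ has $l+1$ components; and the normal fibers are the $E_j$, each built from copies of $D^{m-n+1}$ and the pair of pants (the cylinders being absorbed into adjacent regions), as required.

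The heart of the proof, and the main obstacle, is the remaining part of condition~(4): to identify $M'$ with the connected sum $M_1\#\cdots\#M_l$ and to match each component of $F_1$ with a fiber of one of the bundles $M_i\to S^n$. I would do this by analysing the decomposition of $M'$ induced by $f$: the proper core region contributes $l+1$ copies of $D^n\times S^{m-n}$; each spun pair of pants $P^{m-n+1}\times S^{n-1}$, when glued along two of these pieces, creates a piece diffeomorphic to the complement of an open ball in some $S^{m-n}$-bundle over $S^n$; and the final cap $D^{m-n+1}\times S^{n-1}$ closes up the remaining boundary. The hypothesis $m\ge 2n$ ensures sufficient dimensional room that by varying the framings of the Morse critical points in each $E_j$ one can realise any prescribed clutching function for the corresponding bundle $M_i$, so every connected sum of such bundles arises. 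Assembling these pieces yields the required diffeomorphism $M'\cong M_1\#\cdots\#M_l$.
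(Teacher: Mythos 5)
The paper itself gives no proof of Proposition \ref{prop:1}: it is quoted from \cite{kitazawa} (and \cite{kitazawa4}), so there is nothing in-text to compare your argument against. Your reconstruction via the locally trivial spinning construction is the natural one and is consistent with how the paper uses such maps elsewhere (Example \ref{ex:1} (\ref{ex:1.2}), the proof of Proposition \ref{prop:2}), and your bookkeeping of the $F_j$ and of the number of singular components is right. But two steps are genuine gaps. The first concerns condition (3). By Definition \ref{def:3} the normal fiber $F_C$ is the fiber of the \emph{entire} trivial bundle $f^{-1}(N(C))$, i.e.\ in your construction the whole of $\tilde f_j^{-1}([j-\tfrac12+\epsilon^{\prime},j+\tfrac12-\epsilon^{\prime}])$ near the critical level. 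Since the embedding condition on $f\mid_{S(f)}$ forces exactly one critical point per singular level, for $l\geq 2$ some levels have regular-fiber components passing through with no critical point, and these contribute cylinder components $S^{m-n}\times[0,1]$ to $F_C$. A cylinder is $S^{m-n+1}$ with \emph{two}, not three, discs removed, so it is not one of the two allowed pieces; your remark that the cylinders are ``absorbed into adjacent regions'' does not address this, because they are part of $F_C$ by definition. You must either argue condition (3) differently or make explicit how such product components are to be accounted for.

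The second and more serious gap is the step you yourself identify as the heart of the matter: the identification $M^{\prime}\cong M_1\#\cdots\#M_l$ and the realizability of an \emph{arbitrary} connected sum of $S^{m-n}$-bundles over $S^n$. Your mechanism --- ``varying the framings of the Morse critical points in each $E_j$'' --- is misplaced: the pieces $E_j\times S^{n-1}$ are honest products, so no bundle-theoretic information lives in the Morse functions. The clutching data of each summand is encoded in the gluing bundle isomorphisms between consecutive annular pieces (the maps ${\phi}_k$ of the proof of Proposition \ref{prop:2}, i.e.\ maps $S^{n-1}\rightarrow {\rm Diff}(S^{m-n})$ on each component of the regular fiber), exactly as in Example \ref{ex:1} (\ref{ex:1.2}). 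Moreover you never actually use the hypothesis $m\geq 2n$ beyond an appeal to ``dimensional room''; since this hypothesis is what should guarantee that the assembled manifold is a connected sum in a well-defined way (independence of the choices of the embedded spheres and tubes along which the summands are joined), its role needs to be pinned down rather than asserted. As written, the key conclusion of the proposition is stated but not proved.
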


\begin{Prop}[\cite{kitazawa3}]
\label{prop:2}
Let $M$ be a closed manifold of dimension $m$ and $f:M \rightarrow {\mathbb{R}}^n$ be a locally $C^{\infty}$ trivial round
 fold map. Let $F \neq \emptyset$ be a closed manifold and ${M}^{\prime}$ be a closed manifold regarded as the total space of an $F$-manifold over $M$ such
 that for any connected component $C$ of $f(S(f))$ and a small closed tubular neighborhood $N(C)$ of $C$, the
 restriction to $f^{-1}(N(C))$ is a trivial bundle. Then on the manifold ${M}^{\prime}$, there exists a locally $C^{\infty}$ trivial round
 fold map $f^{\prime}:{M}^{\prime} \rightarrow {\mathbb{R}}^n$. 
\end{Prop}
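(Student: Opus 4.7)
The plan is to construct $f^{\prime}$ by applying the locally trivial spinning construction recalled just before the statement, with new cobordisms obtained from those for $f$ by multiplying each piece with the fiber $F$. First, I would use the locally $C^{\infty}$ trivial structure of $f$ to present $M$ in the form supplied by that construction: fix the tubular neighborhoods $N(C_k)$ of each singular value sphere $C_k$, and on each piece $f^{-1}(A_j)$ corresponding to an annular (or disk) region $A_j$ lying between consecutive such neighborhoods, exhibit the product decomposition $E_j \times S^{n-1}$ on which $f$ equals $\tilde{f_j} \times \mathrm{id}_{S^{n-1}}$ for a Morse function $\tilde{f_j}$ on the cobordism $E_j$ between $F_j$ and $F_{j+1}$.

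Next, I would verify that the $F$-bundle $\pi:M^{\prime} \to M$ is trivial over every such piece $f^{-1}(A_j)$, not only over the singular value neighborhoods assumed in the statement. Indeed, each $A_j$ deformation retracts onto a sphere contained in some $N(C_k)$, so the restriction of $\pi$ to $f^{-1}(A_j)$ is isomorphic to the pullback of its restriction to the preimage of that sphere, which is trivial by hypothesis. This yields global trivializations $M^{\prime}|_{f^{-1}(A_j)} \cong E_j \times S^{n-1} \times F$.

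I would then construct, on each new cobordism $E_j^{\prime}:=E_j \times F$, a Morse function $\tilde{h_j}$ that is constant on $F_j \times F$ and on $F_{j+1} \times F$ (with the same boundary values as $\tilde{f_j}$), that has all critical points in the interior, and whose critical values are distinct. A concrete recipe is to start from $(x,z) \mapsto \tilde{f_j}(x)$, add a perturbation of the form $\epsilon\,\rho(\tilde{f_j}(x))\,g(z)$ for a Morse function $g$ on $F$ and a bump function $\rho$ vanishing near the endpoints of $[k_j - \frac{1}{2}, k_{j+1} - \frac{1}{2}]$, then apply a standard small Morse perturbation to split the critical values. Setting $f^{\prime}$ equal to $\tilde{h_j} \times \mathrm{id}_{S^{n-1}}$ on $M^{\prime}|_{f^{-1}(A_j)}$ and running the locally trivial spinning construction on the family $\{E_j^{\prime}\}$ then produces a fold map whose singular value set is a union of concentric $(n-1)$-spheres in $\mathbb{R}^n$.

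The step I expect to be the main obstacle is the gluing: reconciling the chosen trivializations of $\pi$ across the interfaces so that the piecewise-defined maps assemble into a well-defined fold map on $M^{\prime}$. The key observation that resolves this is that each $\tilde{h_j}$ is constant on its two boundary components, so any $\mathrm{Diff}(F)$-valued transition function on $F_j \times S^{n-1}$ preserves the value of $\tilde{h_j} \times \mathrm{id}_{S^{n-1}}$ there; the pieces thus glue without any further compatibility requirement. Local $C^{\infty}$ triviality of the resulting $f^{\prime}$ is built into the construction, since in a tubular neighborhood of each new singular value sphere the map is, by design, a product of a local Morse model in $E_j^{\prime}$ with $\mathrm{id}_{S^{n-1}}$.
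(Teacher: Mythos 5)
Your proposal is correct and follows essentially the same route as the paper's proof: decompose $M$ into the preimages of the proper core and of annuli around the singular value spheres, use the hypothesis together with homotopy invariance of bundles to trivialize the $F$-bundle over each piece, build Morse functions on the products $E_j \times F$ that are constant on the boundary and have distinct interior critical values, and glue the resulting maps of the form $\tilde{h_j} \times {\rm id}_{S^{n-1}}$ using the boundary-constancy to absorb the transition functions. The only quibble is that the innermost disk-type region deformation retracts onto a regular fiber over a boundary point rather than onto a sphere, but the triviality conclusion you need is unaffected.
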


We introduce the proof of this first performed in \cite{kitazawa3}. In the proof of
 this proposition, we use the notation ${D^n}_r:=\{(x_1,\cdots,x_n) \in {\mathbb{R}}^n \mid {\sum}_{k=1}^{n}{x_k}^2 \leq r\}$ appearing in Definition \ref{def:1}.
 
\begin{proof}[Proof of Proposition \ref{prop:2}]

We only prove the proposition in the case where $f$ is locally $C^{\infty}$ trivial, since we can similarly prove this in the case where $f$ is $C^{\infty}$ trivial. We assume that $f(M)$ is diffeomorphic to $D^n$ and we can prove the theorem similarly if $f(M)$ is not diffeomorphic to $D^n$.

\ \ \ We may assume that $f:M \rightarrow {\mathbb{R}}^n$ is a normal form. Let $S(f)$ consist of $l$ connected
 components. Set $P_0:={D^n}_{\frac{1}{2}}$ and $P_k:={D^n}_{k+\frac{1}{2}}-{\rm Int} {D^n}_{k-\frac{1}{2}}$ for an integer $1 \leq k \leq l$. Then ${f}^{-1}(P_k)$ is regarded as the total
 space of a trivial bundle
 over $\partial {D^n}_{k-\frac{1}{2}}$ or $\partial {D^n}_{k+\frac{1}{2}}$ such that the fibers are diffeomorphic to a compact manifold, which
 we denote by $E_k$ and that the submersions $f {\mid}_{{f^{-1}}(\partial {D^n}_{k-\frac{1}{2}})}$ and $f {\mid}_{f^{-1}(\partial {D^n}_{k+\frac{1}{2}})}$ make the
 submanifolds subbundles of the bundle ${f}^{-1}(P_k)$; we denote the fibers of these
 two subbundles by ${E_k}^{1} \subset E_k$ and ${E_k}^2 \subset E_k$, respectively. For any integer $1 \leq k \leq l$ and a diffeomorphism ${\phi}_k$ from
 $f^{-1}(\partial {D^n}_{k-\frac{1}{2}}) \subset f^{-1}(P_k)$ onto $f^{-1}(\partial {D^n}_{k-\frac{1}{2}}) \subset f^{-1}(P_{k-1})$ regarded as a
 bundle isomorphism between the two trivial bundles over standard spheres inducing the identification between the base spaces, $M$ is regarded as $(\cdots ((f^{-1}({D^n}_{\frac{1}{2}})) {\bigcup}_{{\phi}_1} f^{-1}(P_1)) \cdots) {\bigcup}_{{\phi}_l} f^{-1}(P_l)$
 and for any integer $1 \leq k \leq l$ and a diffeomorphism ${\Phi}_k$ from
 $f^{-1}(\partial {D^n}_{k-\frac{1}{2}}) \times F \subset f^{-1}(P_k) \times F$ onto $f^{-1}(\partial {D^n}_{k-\frac{1}{2}}) \times F \subset f^{-1}(P_{k-1}) \times F$ regarded as a
 bundle isomorphism between the two trivial $F$-bundles inducing ${\phi}_k$, ${M}^{\prime}$ is
 regarded as
 $(\cdots ((f^{-1}({D^{n}}_{\frac{1}{2}}) \times F) {\bigcup}_{{\Phi}_1} (f^{-1}(P_1) \times F)) \cdots) {\bigcup}_{{\Phi}_l} ({f^{-1}}(P_l) \times F)$. We construct a map on $f^{-1}(P_k) \times F$. This manifold is regarded as the
 total space of a
 trivial bundle over $\partial {D^n}_{k-\frac{1}{2}}$ or $\partial {D^n}_{k+\frac{1}{2}}$ such that the fibers are diffeomorphic to $E_k \times F$. On $E_k \times F$ there exists
 a Morse function $\tilde{f_k}$ such that the following four hold.

\begin{enumerate}
\item $\tilde{f_k}(E_k \times F) \subset [k-\frac{1}{2},k+\frac{1}{2}]$ and $\tilde{f_k}({\rm Int} (E_k \times F)) \subset (k-\frac{1}{2},k+\frac{1}{2})$ hold. 
\item $\tilde{f_k}({E_k}^1 \times F)=\{k-\frac{1}{2}\}$ holds if ${E_k}^1 \times F$ is non-empty. 
\item $\tilde{f_k}({E_k}^2 \times F)=\{k+\frac{1}{2}\}$ holds if ${E_k}^2 \times F$ is non-empty.
\item Singular points of $\tilde{f_k}$ are in the interior of $E_k \times F$ and at two distinct singular points, the values are always distinct.
\end{enumerate}

We obtain a map ${\rm id}_{S^{n-1}} \times \tilde{f_k}:S^{n-1} \times E_k \times F \rightarrow S^{n-1} \times [k-\frac{1}{2},k+\frac{1}{2}]$. We can identify $S^{n-1} \times [k-\frac{1}{2},k+\frac{1}{2}]$
 with $P_k={D^n}_{k+\frac{1}{2}}-{\rm Int} {D^n}_{k-\frac{1}{2}}$ by identifying $(p,t) \in S^{n-1} \times [k-\frac{1}{2},k+\frac{1}{2}]$ with $tp \in P_k$ where we regard $S^{n-1}$ as the unit sphere of dimension $n-1$. 
 By gluing the composition of the projection from $f^{-1}({D^n}_{\frac{1}{2}}) \times F$ onto $f^{-1}({D^n}_{\frac{1}{2}})$ and $f {\mid}_{f^{-1}({D^n}_{\frac{1}{2}})}:f^{-1}({D^n}_{\frac{1}{2}}) \rightarrow {D^n}_{\frac{1}{2}}$ and the family
 $\{{\rm id}_{S^{n-1}} \times \tilde{f_k}\}$ together by using the family $\{{\Phi}_k \}$ and the family of identifications in the target manifold ${\mathbb{R}}^n$, we obtain
 a new round fold map ${f}^{\prime}:M^{\prime} \rightarrow {\mathbb{R}}^n$. 
\end{proof}

In the proof, from a given map $f:M \rightarrow {\mathbb{R}}^n$, we obtain a new map ${f}^{\prime}:M^{\prime} \rightarrow {\mathbb{R}}^n$. We call the operation of
 constructing ${f}^{\prime}$ from $f$ a {\it P-operation by $F$ to the map $f$}. For example, if $f$ is a map presented in Example \ref{ex:1} (\ref{ex:1.1}), then
 on any manifold having the bundle structure over the source homotopy sphere, we can construct a (locally) $C^{\infty}$ trivial round
 fold map by a P-operation to the map $f$.

\section{Constructions of round fold maps on manifolds regarded as linear bundles by P-operations}
\label{sec:4}

 In this paper, we denote the {\it $k$-th orthogonal group} by $O(k)$ and {\it the $k$-th special orthogonal group} by $SO(k)$. In this paper, a
 bundle is said to be {\it linear} if the structure group is a subgroup of an orthogonal group. A linear bundle is said to be {\it orientable} if the
 structure group is reduced to a subgroup of a special orthogonal group and we obtain two {\it oriented} linear bundles naturally.  

  We recall known fundamental terms and facts on linear bundles.

 For any linear bundle, we can consider its {\it $k$-th Stiefel-Whitney class}, which is a $k$-th cohomology class of the base space whose coefficient ring is $\mathbb{Z}/2\mathbb{Z}$. For any
 oriented linear bundle whose structure group is $SO(k)$, we can consider its {\it Euler class}, which is a $k$-th cohomology class of the base space whose
 coefficient ring is $\mathbb{Z}$. We introduce known facts on classifications of linear bundles without proofs.

\begin{Prop}
\label{prop:3}
Let $X$ be a topological space regarded as a CW-complex.
\begin{enumerate}
\item
\label{prop:3.1}
 The 1st Stiefel-Whitney class $\alpha \in H^1(X;\mathbb{Z}/2\mathbb{Z})$ of a linear bundle over $X$ vanishes if and only if the bundle is orientable.
\item
\label{prop:3.2}
 For any $\alpha \in H^2(X;\mathbb{Z})$, there exists an oriented linear bundle whose structure group is $SO(2)$ whose Euler class is $\alpha$ and the
 2nd Stiefel-Whitney class $\bar{\alpha} \in H^2(X;\mathbb{Z}/2\mathbb{Z})$ of this bundle is the value of the
 canonical homomorphism from $\mathbb{Z}$ onto $\mathbb{Z}/2\mathbb{Z}$, which sends the integer $k$ to $k(\mod 2)$. 
\item
\label{prop:3.3}
 Two oriented linear bundles whose structure groups are $SO(2)$ over $X$ are equivalent if and only if the Euler classes
 are same. Especially, an oriented linear bundle whose structure group is $SO(2)$ over $X$ is trivial if and only if its Euler class vanishes.
\item
\label{prop:3.4}
 Let $k=0, 1, 2, 3$ and let $X$ be simple homotopy equivalent to a $k$-dimensional CW-complex. Let $l$ be an integer larger than $2$. Then, a linear bundle over $X$ such that the structure group is $SO(l)$ and that the 2nd Stiefel-Whitney class vanishes is a trivial linear bundle.
\end{enumerate}
\end{Prop}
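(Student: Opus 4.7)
The plan is to treat the four parts of Proposition \ref{prop:3} in sequence, each by invoking a classical result from the theory of classifying spaces and obstruction theory (as developed in \cite{milnorstasheff}).

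For (\ref{prop:3.1}), I would use that an $O(k)$-bundle reduces to an $SO(k)$-structure (that is, is orientable) precisely when the associated $O(k)/SO(k)\cong\mathbb{Z}/2\mathbb{Z}$-bundle admits a section. Such principal $\mathbb{Z}/2\mathbb{Z}$-bundles are classified by $H^1(X;\mathbb{Z}/2\mathbb{Z})$, and the classifying class is, by the obstruction-theoretic characterization, the first Stiefel--Whitney class $w_1$. The equivalence is then immediate.

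For (\ref{prop:3.2}) and (\ref{prop:3.3}), the key input is the homotopy equivalence $BSO(2)\simeq\mathbb{CP}^{\infty}=K(\mathbb{Z},2)$. Hence the isomorphism classes of oriented $SO(2)$-bundles over $X$ are in natural bijection with $[X,K(\mathbb{Z},2)]=H^2(X;\mathbb{Z})$, and this bijection is realized by the Euler class. This yields both the realization statement in (\ref{prop:3.2}) and the classification in (\ref{prop:3.3}). The relation $w_2\equiv e\pmod 2$ for oriented rank-$2$ bundles then follows from naturality together with its computation on the universal example over $BSO(2)$.

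For (\ref{prop:3.4}), I would apply skeleton-by-skeleton obstruction theory. The bundle is trivial if and only if the associated principal $SO(l)$-bundle admits a global section, and the obstructions to extending a section from the $i$-skeleton to the $(i+1)$-skeleton live in $H^{i+1}(X;\pi_i(SO(l)))$. For $l\geq 3$ one has $\pi_0(SO(l))=0$, $\pi_1(SO(l))=\mathbb{Z}/2\mathbb{Z}$ and $\pi_2(SO(l))=0$, so on a CW-complex of dimension at most $3$ the only possibly nonzero obstruction occurs in $H^2(X;\mathbb{Z}/2\mathbb{Z})$, and this primary obstruction is precisely $w_2$. Since $w_2=0$ by hypothesis and $X$ is simple homotopy equivalent to such a low-dimensional CW-complex, the section exists and the bundle is trivial.

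The main technical point, which I would cite rather than rederive, is the identification in (\ref{prop:3.4}) of the primary obstruction to trivialization of an $SO(l)$-bundle ($l\geq 3$) with $w_2$; this is the standard obstruction-theoretic characterization of the second Stiefel--Whitney class given in \cite{milnorstasheff}. Everything else reduces to direct applications of well-known classification theorems.
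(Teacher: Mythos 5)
Your proof is correct, and it fills in arguments the paper deliberately omits: the paper states Proposition \ref{prop:3} as a collection of known facts "without proofs," citing \cite{milnorstasheff} for the general theory. Your route --- $w_1$ as the classifying class of the orientation double cover for (\ref{prop:3.1}), the identification $BSO(2)\simeq K(\mathbb{Z},2)$ for (\ref{prop:3.2}) and (\ref{prop:3.3}), and skeleton-wise obstruction theory with $\pi_0(SO(l))=\pi_2(SO(l))=0$ and primary obstruction $w_2$ for (\ref{prop:3.4}) --- is exactly the standard justification these facts rest on, so there is nothing to contrast with the paper's (nonexistent) argument. The only point worth making explicit is that in (\ref{prop:3.4}) the potential obstruction in $H^3(X;\pi_2(SO(l)))$ vanishes for coefficient reasons independently of the choice of section on the $2$-skeleton, which you have implicitly used and which closes the argument cleanly.
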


 In this section, we define a {\it spin} bundle as an orientable linear bundle such that the 2nd Stiefel-Whitney class vanishes as in Proposition \ref{prop:3} (\ref{prop:3.4}).

 $k$-dimensional real (complex) vector bundles are
 regarded as linear bundles whose structure groups are the groups of all the linear transformations on the
 fibers. In this paper, we only consider real vector bundles. We also note that the structure groups of $k$-dimensional real vector bundles
 are regarded as the groups of all the orthgonal transformations on the $k$-dimensional vector spaces and naturally as $O(k)$. 

 Any $S^{k}$-bundle
 whose structure group consists of linear transformations on the fiber $S^k$ is regarded as a linear bundle whose structure group
 is the group of all the transformations given by the restrictions of orthgonal transformations on ${\mathbb{R}}^{k+1}$ considering $S^k$ as the unit
 sphere and as a result $O(k+1)$. It is naturally a subbundle of an associated real vector bundle whose fiber is a ($k+1$)-dimensional real vector space. We call such a
 linear bundle a {\it standard linear bundle}. For $k=1,2,3$, any $S^k$-bundle
 is regarded as a standard linear bundle whose structure group
 is $O(k+1)$ (see \cite{smale} for the case $k=2$ and \cite{hatcher} for the case $k=3$).
 
  First, we review some topological properties of the total
 spaces of such bundles. For a $k$-dimensional manifold $X$, let
 us denote by $TX$ the total space of the tangent bundle of $X$, which is an important $k$-dimensional
 real vector bundle over $X$ and for $k \geq 2$, let us denote the total space of the unit tangent bundle of $X$ by $UTX$, which
 is obtained as the subbundle of the bundle $TX$ whose fiber is the unit sphere $S^{k-1} \subset {\mathbb{R}}^k$. A manifold is said
 to be {\it spin} if its tangent bundle is spin.

  Also, the following Proposition \ref{prop:4} is useful.

\begin{Prop}
\label{prop:4}

\begin{enumerate}
\item
\label{prop:4.1}
 Let $i \geq 1$ be an integer. For any topological space $E$ regarded as the total space of a standard linear $S^i$-bundle over a topological space $X$ regarded as a CW complex. Let $\pi$ be
 a surjection giving $E$ the bundle structure over $X$. Let $w_{i+1} \in H^{i+1}(X;\mathbb{Z}/2\mathbb{Z})$ be the {\rm (}$i+1${\rm )}-th Stiefel-Whitney class of the bundle $E$ and $\bigcup w_{i+1}$ be the
 operation of taking a cup product with $w_{i+1}$. 
 Then, we have the following exact sequence {\rm (}the
 {\rm Gysin sequence} of the bundle $\pi:E \rightarrow X${\rm )}.

$$
\begin{CD}
@> {\pi}^{\ast}  >> H^j(E;\mathbb{Z}/2\mathbb{Z}) @> >>  H^{j-i}(X;\mathbb{Z}/2\mathbb{Z}) @> \bigcup w_{i+1}  >> H^{j+1}(X;\mathbb{Z}/2\mathbb{Z}) \\ @> {\pi}^{\ast} >> H^{j+1}(E;\mathbb{Z}/2\mathbb{Z}) @> >>  H^{j-i+1}(X;\mathbb{Z}/2\mathbb{Z}) @> \bigcup w_{i+1}  >>
\end{CD}
$$

We also have the following exact sequence in the case where the bundle $E$ is oriented. We denote the Euler class by $e \in H^{i+1}(X;\mathbb{Z})$ and $\bigcup e$ be the operation of taking a cup product with $e$.

$$
\begin{CD}
@> {\pi}^{\ast}  >> H^j(E;\mathbb{Z}) @> >>  H^{j-i}(X;\mathbb{Z}) @> \bigcup e  >> H^{j+1}(X;\mathbb{Z}) \\ @> {\pi}^{\ast} >> H^{j+1}(E;\mathbb{Z}) @> >>  H^{j-i+1}(X;\mathbb{Z}) @> \bigcup e  >>
\end{CD}
$$

\item
\label{prop:4.2}
 Let $X_1$ and $X_2$ be closed manifolds and set $X:=X_1 \times X_2$. Then the tangent bundle $TX$ over $X$ is
 regarded as the Whitney sum of the pull-back of the tangent bundle $TX_1$ over $X_1$ by the canonical
 projection of $X_1 \times X_2$ onto $X_1$ and the pull-back of the tangent bundle $TX_2$ over $X_2$ by the canonical
 projection of $X_1 \times X_2$ onto $X_2$.

\item
\label{prop:4.3}
 Let $X$ be a closed manifold and let $X^{\prime}$ be a manifold regarded as the total space of a standard linear
 bundle whose fiber is diffeomorphic to the standard sphere of dimension $k \geq 1$.
\begin{enumerate}
\item Let $X$ be orientable. If the bundle $X^{\prime}$ is {\rm (}not{\rm )} orientable, then the tangent bundle $TX^{\prime}$ of $X^{\prime}$ is {\rm (}resp. not{\rm )} orientable.
\item Let $k \geq 2$ and let the manifold $X$ be spin. If the bundle $X^{\prime}$ is spin {\rm (}not spin{\rm )}, then the tangent
 bundle $TX^{\prime}$ is spin {\rm (}not spin{\rm )}. 
\end{enumerate}
\end{enumerate}
\end{Prop}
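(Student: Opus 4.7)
My plan is to treat parts (\ref{prop:4.1}) and (\ref{prop:4.2}) as essentially standard and to concentrate the work on (\ref{prop:4.3}). For (\ref{prop:4.1}) I would combine the long exact sequence of the pair $(D(V),E)$, where $V\to X$ is the rank-$(i+1)$ real vector bundle whose unit sphere bundle is $E$, with the Thom isomorphism $H^j(D(V),E)\cong H^{j-i-1}(X)$ for the appropriate coefficients; the connecting homomorphism then becomes cup product with the restriction of the Thom class to the base, namely $w_{i+1}(V)$ in the mod $2$ case and $e(V)$ in the oriented integral case. For (\ref{prop:4.2}), the pointwise canonical decomposition $T_{(x_1,x_2)}(X_1\times X_2)=T_{x_1}X_1\oplus T_{x_2}X_2$ assembles globally into the stated Whitney sum.

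The core of the argument is (\ref{prop:4.3}). Since $X'$ is the total space of a standard linear $S^k$-bundle, I may write $X'=S(V)$ for a rank-$(k+1)$ real vector bundle $V\to X$ equipped with a fibrewise inner product coming from its $O(k+1)$ structure. The tautological radial vector on $S(V)$ trivializes a line subbundle of $\pi^*V$ whose orthogonal complement is precisely the vertical tangent bundle $T_{\mathrm{vert}}X'$. Combining this with the standard splitting $TX'\cong \pi^*TX\oplus T_{\mathrm{vert}}X'$ coming from the submersion $\pi$ yields
\[
TX'\oplus \epsilon^1 \;\cong\; \pi^*(TX\oplus V),
\]
with $\epsilon^1$ the trivial real line bundle, and hence $w(TX')=\pi^*\!\bigl(w(TX)\cdot w(V)\bigr)$ by the Whitney product formula.

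The low-degree computations are then immediate. In (a), the hypothesis $w_1(TX)=0$ gives $w_1(TX')=\pi^*w_1(V)$. In (b), the hypotheses $w_1(TX)=w_2(TX)=0$ together with the orientability required in the definition of ``spin'' for the bundle give $w_2(TX')=\pi^*w_2(V)$. Proposition \ref{prop:3} (\ref{prop:3.1}) identifies orientability of $V$ with the vanishing of $w_1(V)$, and the definition of spin recalled above identifies spinness of $V$ with the additional vanishing of $w_2(V)$. To upgrade these equalities to equivalences for $TX'$ I only need $\pi^*$ to be injective on $H^1(X;\mathbb{Z}/2\mathbb{Z})$ and on $H^2(X;\mathbb{Z}/2\mathbb{Z})$, which falls out of the Gysin sequence of (\ref{prop:4.1}): the group $H^{j-k-1}(X;\mathbb{Z}/2\mathbb{Z})$ preceding $\pi^*$ vanishes for $j=1$ whenever $k\geq 1$, and for $j=2$ whenever $k\geq 2$.

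The step I expect to require the most care is the identification $T_{\mathrm{vert}}X'\oplus\epsilon^1\cong\pi^*V$ of the vertical tangent bundle, which genuinely uses that $X'$ is the unit sphere bundle of a linear bundle rather than an arbitrary smooth sphere bundle over $X$; this is precisely the reason Proposition \ref{prop:4} is stated only for \emph{standard linear} bundles.
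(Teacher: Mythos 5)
The paper does not actually prove Proposition \ref{prop:4}: it states all three parts as known facts and refers the reader to \cite{milnorstasheff} for the theory of characteristic classes, so there is no internal argument to compare yours against. Your proposal is correct and fills this gap with the standard arguments. Part (\ref{prop:4.1}) via the long exact sequence of the pair $(D(V),E)$ and the Thom isomorphism, with the connecting map identified as cup product with $w_{i+1}(V)$ (mod $2$) or $e(V)$ (oriented integral case), is exactly the Milnor--Stasheff derivation of the Gysin sequence; part (\ref{prop:4.2}) is the pointwise splitting of the tangent bundle of a product. For part (\ref{prop:4.3}) your chain $TX'\oplus\epsilon^1\cong\pi^*(TX\oplus V)$, obtained from the submersion splitting together with the identification $T_{\mathrm{vert}}X'\oplus\epsilon^1\cong\pi^*V$ via the radial section, is the right mechanism, and you correctly isolate the two points that make the equivalences (rather than mere implications) work: the vanishing of $w_1(TX)$ (resp.\ $w_1(TX)$ and $w_2(TX)$) so that $w_1(TX')=\pi^*w_1(V)$ and $w_2(TX')=\pi^*w_2(V)$, and the injectivity of $\pi^*$ on $H^1$ for $k\geq 1$ and on $H^2$ for $k\geq 2$, which follows from the Gysin sequence of part (\ref{prop:4.1}) because the group $H^{j-k-1}(X;\mathbb{Z}/2\mathbb{Z})$ mapping onto the kernel vanishes in those degrees. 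You are also right that the identification of the vertical tangent bundle is the step that genuinely uses the \emph{standard linear} hypothesis, i.e.\ that $X'$ is the unit sphere bundle of an honest vector bundle; this matches the paper's setup, where a standard linear $S^k$-bundle is by definition a subbundle of an associated rank-$(k+1)$ real vector bundle and its characteristic classes are those of that vector bundle.
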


 For the theory of linear bundles and their
 characteristic classes including Stiefel-Whitney classes and Euler classes, see also \cite{milnorstasheff} for example.

 In \cite{kitazawa5}, we have constructed a lot of explicit round
 fold maps on manifolds regarded as the total spaces of $S^1$-bundles over a manifold admitting a locally $C^{\infty}$ trivial round fold map by using P-operations. 
 In this section, we apply P-operations to locally $C^{\infty}$ trivial round fold maps to construct new round fold maps on manifolds regarded as the total spaces
 of linear bundles and more general bundles over the original source manifolds.  

\subsection{Cases for round fold maps between low-dimensional manifolds}

First we show the following theorem, which gives more round fold maps and their source manifolds.

\begin{Thm}
\label{thm:1}
Let $M$ be a closed manifold of dimension $2 \leq m \leq 4$, $f:M \rightarrow {\mathbb{R}}^n$ {\rm (}$m \geq n \geq 2${\rm )} be a locally $C^{\infty}$ trivial round
 fold map. Let $M^{\prime}$ be a manifold regarded as the total space of a linear bundle whose fiber is a closed manifold $F \neq \emptyset$. Then we have the following. 
\begin{enumerate}
\item Let $n=4$. Then by a P-operation by $F$ to $f$, we can obtain a locally $C^{\infty}$ trivial round fold map from $M^{\prime}$ into ${\mathbb{R}}^n= {\mathbb{R}}^4$.
\item Let $(m,n)=(4,3)$ and let $M$ be connected. We assume that for a connected component $C_0$ of the singular value set $f(S(f))$ and a small closed tubular
 neighborhood $N(C_0)$ as in Definition \ref{def:3}, the 2nd Stiefel-Whitney class of
 the restriction of the bundle $M^{\prime}$ above to the image of a section of the trivial ${\partial}_0 F_{C_0}$-bundle vanishes and that for any
 connected component $C$ of the singular value set $f(S(f))$ and a small closed tubular
 neighborhood $N(C)$ as in Definition \ref{def:3}, the restriction of the bundle $M^{\prime}$ above to the normal fiber $F_C$ of $C$ corresponding
 to the bundle $f^{-1}(N(C))$ is orientable. Then
 by a P-operation by $F$ to the map $f$, we can obtain a locally $C^{\infty}$ trivial round fold map from $M^{\prime}$ into ${\mathbb{R}}^n$.     
\item Let $(m,n)=(4,2)$. We assume that for any connected component $C$ of the singular value set $f(S(f))$
 and a small closed tubular
 neighborhood $N(C)$, the restriction of the bundle $M^{\prime}$ to $f^{-1}(N(C))$ is spin. Then
 by a P-operation by $F$ to the map $f$, we can obtain a locally $C^{\infty}$ trivial round fold map from $M^{\prime}$ into ${\mathbb{R}}^n$.
\item Let $(m,n)=(3,3)$ and let $M$ be connected. We assume that for a connected component $C$ of the singular value set $f(S(f))$ and a small closed tubular
 neighborhood $N(C)$ as in Definition \ref{def:3}, the 2nd Stiefel-Whitney class of
 the restriction of the bundle $M^{\prime}$ to the image of a section of the trivial ${\partial}_0 F_{C_0}$-bundle vanishes. Then
 by a P-operation by $F$ to the map $f$, we can obtain a locally $C^{\infty}$ trivial round fold map from $M^{\prime}$ into ${\mathbb{R}}^n$. 
\item Let $(m,n)=(3,2)$. We assume that for any connected component $C$ of the singular value set $f(S(f))$
 and a small closed tubular
 neighborhood $N(C)$, the restriction of the bundle $M^{\prime}$ to $f^{-1}(N(C))$ is spin. Then
 by a P-operation by $F$ to the map $f$, we can obtain a locally $C^{\infty}$ trivial round fold map from $M^{\prime}$ into ${\mathbb{R}}^n$. 
\item Let $(m,n)=(2,2)$ and let $M$ be connected. We assume that for a connected component $C_0$ of the singular value set $f(S(f))$ and a small closed tubular
 neighborhood $N(C_0)$ as in Definition \ref{def:3}, the restriction of the bundle $M^{\prime}$ to the image of a section  of the trivial ${\partial}_0 F_{C_0}$-bundle is orientable. Then
 by a P-operation by $F$ to the map $f$, we can obtain a locally $C^{\infty}$ trivial round fold map from $M^{\prime}$ into ${\mathbb{R}}^n$. 
\end{enumerate}
\end{Thm}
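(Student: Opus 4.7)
For each of the six parts, the strategy is to invoke Proposition \ref{prop:2} applied to $M^\prime \to M$: once I verify that the restriction of the linear bundle to every piece $f^{-1}(N(C))$ is trivial, Proposition \ref{prop:2} produces the desired locally $C^\infty$ trivial round fold map $M^\prime \to \mathbb{R}^n$ as the outcome of a P-operation by $F$ applied to $f$. The proof thus reduces to a piece-by-piece triviality check. Since $f$ is locally $C^\infty$ trivial, each $f^{-1}(N(C))$ is diffeomorphic to $F_C \times S^{n-1}$, where $F_C$ is a compact $(m-n+1)$-manifold with non-empty boundary, hence simple homotopy equivalent to a CW complex of dimension at most $m-n$. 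Consequently $f^{-1}(N(C))$ is simple homotopy equivalent to a CW complex of dimension at most $m-1 \leq 3$ in every case, and Proposition \ref{prop:3}(\ref{prop:3.4}) governs the triviality of $SO(l)$-bundles with $l > 2$ over it, while Proposition \ref{prop:3}(\ref{prop:3.3}) handles the $SO(2)$ subcase via the Euler class.

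Next, I would verify case by case that the hypotheses imply the vanishing of $w_1$, $w_2$, and of the Euler class of the restricted bundle, using the K\"{u}nneth decomposition of $H^\ast(F_C \times S^{n-1})$. In Case 1 both $H^1(F_C \times S^3; \mathbb{Z}/2\mathbb{Z})$ and $H^2(F_C \times S^3; \mathbb{Z}/2\mathbb{Z})$ vanish because $F_C$ is a disjoint union of intervals, so triviality holds automatically, which explains the absence of any hypothesis. In Cases 2 and 4, where $n \geq 3$, the factor $S^{n-1}$ is simply connected, so $w_1$ of the restriction is controlled by $F_C$ alone (given by the orientability hypothesis in Case 2, and automatic in Case 4 because $F_C$ is homotopy-discrete), while the vanishing of $w_2$ on the image of a section (a copy of $S^{n-1}$) handles the remaining K\"{u}nneth summand in $H^2$. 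Cases 3 and 5 are the most direct: the spin hypothesis on $f^{-1}(N(C))$ itself yields $w_1 = w_2 = 0$. In Case 6 both factors are one-dimensional, $f^{-1}(N(C)) \simeq \sqcup S^1$, and triviality over each circle component is equivalent to orientability, which follows from the hypothesized orientability on the section circle once propagated to the other circles of the piece.

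The central difficulty is the propagation step in Cases 2, 4, and 6, where the hypothesis is imposed at a single innermost component $C_0$ and a single section image, and must be upgraded to the vanishing of characteristic classes on every piece $f^{-1}(N(C))$. I would handle this by exploiting the connectedness of $M$ and the functoriality of Stiefel--Whitney classes for the global bundle $M^\prime \to M$, together with the piecewise decomposition $M = (\cdots (f^{-1}({D^n}_{\frac{1}{2}}) {\bigcup}_{{\phi}_1} f^{-1}(P_1)) \cdots ) {\bigcup}_{{\phi}_l} f^{-1}(P_l)$ from the proof of Proposition \ref{prop:2}, which transports triviality from the innermost piece outward through the successive gluings $\phi_k$. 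A parallel Proposition \ref{prop:3} argument on the proper core preimage $f^{-1}({D^n}_{\frac{1}{2}})$, which has the simple homotopy type of a regular fiber of $f$ (a closed $(m-n)$-manifold), completes the verification needed to apply Proposition \ref{prop:2}.
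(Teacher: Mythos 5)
Your proposal follows essentially the same route as the paper: reduce via Proposition \ref{prop:2} to checking triviality of the restricted bundle on each $f^{-1}(N(C))$, use the simple homotopy equivalence $f^{-1}(N(C))\simeq F_C\times S^{n-1}$ and the K\"unneth computation of $H^1$ and $H^2$ with $\mathbb{Z}/2\mathbb{Z}$ coefficients, and conclude by Proposition \ref{prop:3}. Your treatment is in fact slightly more explicit than the paper's on two points it leaves implicit --- the $SO(2)$ subcase via the Euler class and Proposition \ref{prop:3} (\ref{prop:3.3}), and the propagation of the single-component hypotheses in cases 2, 4 and 6 to all components using connectedness of $M$ and functoriality of Stiefel--Whitney classes --- but these are elaborations of the same argument, not a different one.
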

\begin{proof}
By virtue of Proposition \ref{prop:2}, to prove the statements, it suffices to show that for any connected component $C$ of the singular value set $f(S(f))$
 and a small closed tubular neighborhood $N(C)$ as in Definition \ref{def:3}, the restriction of the bundle $M^{\prime}$ over $M$ to $f^{-1}(N(C))$ is trivial. Note that $f^{-1}(N(C))$
 is regarded as a trivial bundle whose fiber is $F_C$ as menitioned in Definition \ref{def:3}. Note also that $F_C$ is regarded as an ($m-n+1$)-dimensional CW-complex
 simple homotopy equivalent to an ($m-n$)-dimensional CW-complex and that $f^{-1}(N(C))$ is regarded as an $m$-dimenisonal CW-complex simple
 homotopy equivalent to the product of $S^{n-1}$ and the ($m-n$)-dimensional CW-complex before.

We prove the first case. $n=4$ or $(m,n)=(4,4)$ is assumed. In this case, $f^{-1}(N(C))$ is a compact $4$-dimensional manifold diffeomorphic to the
 product of $S^3$ and the closed interval and regarded as a CW-complex
 simple homotopy equivalent to $S^3$. We have $H^1(f^{-1}(N(C));\mathbb{Z}/2\mathbb{Z}) \cong H^2(f^{-1}(N(C));\mathbb{Z}) \cong \{0\}$
 and from Proposition \ref{prop:3} (\ref{prop:3.4}), the restriction of the bundle $M^{\prime}$ over $M$ to $f^{-1}(N(C))$ is trivial.

We prove the second case. $(m,n)=(4,3)$ is assumed. Thus, $f^{-1}(N(C))$ is a compact $4$-dimensional manifold and regarded as a CW-complex
 simple homotopy equivalent to a $3$-dimensional CW-complex and as the product of the $2$-dimensional sphere $S^2$ and the
 compact surface $F_C$ with non-empty boundary. It is
 assumed that for a connected
 component $C_0$ of the singular value set $f(S(f))$ and a small closed tubular
 neighborhood $N(C_0)$, the 2nd Stiefel-Whitney class of
 the restriction of the bundle above to $f^{-1}(N(C_0))$ vanishes and $M$ is connected. Moreover, it is assumed that for any
 connected component $C$ of the singular value set $f(S(f))$ and a small closed tubular
 neighborhood $N(C)$ as in Definition \ref{def:3}, the restriction of the bundle $M^{\prime}$ above to the normal fiber $F_C$ of $C$ corresponding
 to the bundle $f^{-1}(N(C))$ is orientable. We have $H^1(f^{-1}(N(C));\mathbb{Z}/2\mathbb{Z}) \cong H^1(F_C;\mathbb{Z}/2\mathbb{Z})$
 and $H^2(f^{-1}(N(C));\mathbb{Z}/2\mathbb{Z}) \cong H^2(C;\mathbb{Z}/2\mathbb{Z}) \oplus H^2(F_C;\mathbb{Z}/2\mathbb{Z}) \cong H^2(C;\mathbb{Z}/2\mathbb{Z}) \cong \mathbb{Z}/2\mathbb{Z}$ for any
 connected component $C$ of the singular value set $f(S(f))$. By these facts, for any connected
 component $C$ of the singular value set $f(S(f))$ and a small closed tubular
 neighborhood $N(C)$, the 2nd Stiefel-Whitney class of
 the restriction of the bundle $M^{\prime}$ to $f^{-1}(N(C))$ vanishes. From Proposition \ref{prop:3} (\ref{prop:3.4}), the restriction of
 the bundle $M^{\prime}$ over $M$ to $f^{-1}(N(C))$ is spin and trivial. 

We prove the third and fifth cases. In each case, the result follows from the assumption that for any connected component $C$ of the singular value set $f(S(f))$ and a
 small closed tubular neighborhood $N(C)$ as in Definition \ref{def:3}, the restriction of the bundle $M^{\prime}$
 to $f^{-1}(N(C))$, which is regarded as a CW-complex simple homotopy equivalent to a CW-complex of dimension $2$ or $3$, is
 a spin bundle together with Proposition \ref{prop:3} (\ref{prop:3.4}). 

We can prove the fourth and sixth cases by applying methods similar to that of the second case. So we omit the proof.
\end{proof}

 By considering specific cases of some cases of Theorem \ref{thm:1}, as a corollary, we have the following.

\begin{Cor}
\label{cor:1}
Let $M$ be a closed manifold of dimension $2 \leq m \leq 4$, $f:M \rightarrow {\mathbb{R}}^n$ {\rm (}$m \geq n \geq 2${\rm )} be a locally $C^{\infty}$ trivial round
 fold map. Let $M^{\prime}$ be a manifold regarded as the total space of a linear bundle whose fiber is a closed manifold $F \neq \emptyset$. Then
 we have the following.
\begin{enumerate}
\item
\label{cor:1.1}
 Let $(m,n)=(3,2), (4,2), (4,3)$. If the bundle above is a spin bundle, then
 by a P-operation by $F$ to the map $f$, we have a locally $C^{\infty}$ trivial round fold map from $M^{\prime}$ into ${\mathbb{R}}^n$. 
\item
\label{cor:1.2}
 Let $(m,n)=(3,3)$. If the 2nd Stiefel-Whitney class of the bundle above vanishes, then
 by a P-operation by $F$ to the map $f$, we have a locally $C^{\infty}$ trivial round fold map from $M^{\prime}$ into ${\mathbb{R}}^n$.   
\item
\label{cor:1.3}
 Let $(m,n)=(2,2)$. If the bundle above is an orientable bundle, then
 by a P-operation by $F$ to the map $f$, we have a locally $C^{\infty}$ trivial round fold map from $M^{\prime}$ into ${\mathbb{R}}^n$. 
\end{enumerate}
\end{Cor}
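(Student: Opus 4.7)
The plan is to deduce each part of the corollary from the corresponding case of Theorem \ref{thm:1} by verifying that each global hypothesis on the bundle $M^{\prime} \to M$ implies the (weaker, localized) hypothesis stated there. The guiding principle is naturality of Stiefel--Whitney classes and of orientability under restriction: if a characteristic class of a linear bundle vanishes on the whole base, then it vanishes on any subspace because restriction corresponds to pulling back along an inclusion. Consequently no new geometric construction is needed, and the proof is essentially a dictionary between the global hypotheses of the corollary and the local hypotheses of the theorem.

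For (\ref{cor:1.1}) I would treat the three dimension pairs together under the assumption that $M^{\prime} \to M$ is spin, i.e.\ both $w_1$ and $w_2$ of the bundle vanish globally. For $(m,n)=(3,2)$ and $(m,n)=(4,2)$, restriction to $f^{-1}(N(C))$ preserves the spin condition, which is exactly the hypothesis of the fifth and third cases of Theorem \ref{thm:1}, so these reduce at once. For $(m,n)=(4,3)$, the hypothesis of the second case of Theorem \ref{thm:1} splits into two pieces, and I would verify each in turn: the vanishing of $w_2$ on the image of the section of the trivial ${\partial}_0 F_{C_0}$-bundle follows from restriction of the global $w_2$, while orientability of the restriction to each normal fiber $F_C$ follows from restriction of the global $w_1$.

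For (\ref{cor:1.2}) the fourth case of Theorem \ref{thm:1} requires only that the second Stiefel--Whitney class of the restriction of $M^{\prime}$ to the image of a section of the trivial ${\partial}_0 F_{C_0}$-bundle vanishes, and this is immediate from the global hypothesis $w_2 = 0$ by naturality under the inclusion of the section's image. For (\ref{cor:1.3}) the sixth case of Theorem \ref{thm:1} requires only orientability of the restriction of $M^{\prime}$ to the image of such a section, which similarly follows from global orientability by restriction.

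The only real obstacle is bookkeeping: one must identify, for each dimension pair, which item of Theorem \ref{thm:1} is being invoked, check that the single global condition stated in the corollary (spin, $w_2 = 0$, or orientability) implies each local condition appearing there, and recall that the image of a section and the preimage $f^{-1}(N(C))$ are subspaces of $M$ so that pulling back the classifying map of $M^{\prime}$ along the inclusion governs the relevant restricted bundles. Since Proposition \ref{prop:3}~(\ref{prop:3.4}) has already been invoked inside the proof of Theorem \ref{thm:1} to pass from vanishing Stiefel--Whitney classes to triviality of the restriction, no further appeal to it is required, and the corollary follows.
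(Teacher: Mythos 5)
Your proposal is correct and matches the paper's intent exactly: the paper offers no separate proof, stating only that the corollary follows ``by considering specific cases of some cases of Theorem \ref{thm:1},'' which is precisely your reduction of each global hypothesis (spin, vanishing $w_2$, orientability) to the corresponding local hypotheses of Theorem \ref{thm:1} via naturality of $w_1$ and $w_2$ under restriction. Your bookkeeping of which item of Theorem \ref{thm:1} handles which dimension pair, and the observation that the global hypotheses make the connectedness and single-component conditions of the theorem irrelevant, is exactly the argument the paper leaves implicit.
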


\begin{Ex}
\label{ex:2}
\begin{enumerate}
\item
\label{ex:2.1}
In the situation of the former part of Example \ref{ex:1} (\ref{ex:1.2}), let $(m,n)=(4,2)$ and the given map $f$ be $C^{\infty}$ trivial
 and locally $C^{\infty}$ trivial or in the situation of Proposition \ref{prop:1}, let $(m,n)=(4,2)$ and $l=1$. 

 The source manifold $M$ in the example is $S^2 \times S^2$ or a manifold regarded as the total space of a non-trivial
 $S^2$-bundle over $S^2$, which is not spin. For both cases, we have $H^2(M;\mathbb{Z}) \cong {\mathbb{Z}}^2$. Let $M=S^2 \times S^2$, which is
 naturally regarded as the total space of a trivial $S^2$-bundle over $S^2$
 and let $\alpha,\beta \in H^2(M;\mathbb{Z}) \cong {\mathbb{Z}}^2$ be generaters represented by the base space
 and the fiber of the trivial bundle, respectively. Let $a,b \in {\mathbb{Z}}$ and consider the Whitney sum of
 two real (oriented) vector bundles of dimension $2$ whose Euler classes are $a\alpha$ and $2b\beta$, respectively. We immediately have the subbundle whose fiber is the
 unit sphere $S^3$ (we denote the total space by $M^{\prime}$) and the Euler class of the bundle is $2ab$ times a generator of
 the cohomology group $H^4(M;\mathbb{Z}) \cong \mathbb{Z}$ and by Proposition \ref{prop:4} (\ref{prop:4.1}), we have $H^4(M^{\prime};\mathbb{Z}) \cong \mathbb{Z}/|2ab|\mathbb{Z}$. More precisely, we determine
 this cohomology group by using the following Gysin sequence where $\bigcup e$ is the operation of taking a cup product with the Euler class $e \in H^{4}(M;\mathbb{Z})$ of the standard linear bundle $M^{\prime}$ and we often use similar methods in this paper.

$$
\begin{CD}
@>  >> H^3(M^{\prime};\mathbb{Z}) @> >>  H^{0}(M;\mathbb{Z}) \cong \mathbb{Z} @> \bigcup e  >> H^{4}(M;\mathbb{Z}) \cong \mathbb{Z} \\ @> >> H^{4}(M^{\prime};\mathbb{Z}) @> >>  H^{1}(M;\mathbb{Z}) \cong \{0\} @> \bigcup e  >>
\end{CD}
$$

 If $a$ is even, then it
 is a spin bundle and if $a$ is odd, then
 it is not a spin bundle. If the bundle is spin, then
 on the total space $M^{\prime}$ of the $S^3$-bundle over $M=S^2 \times S^2$, we
 can construct a round fold map by applying Theorem \ref{thm:1} or Corollary \ref{cor:1} (\ref{cor:1.1}) and the resulting source manifold is spin (we can apply
 both Theorem \ref{thm:1} and Corollary \ref{cor:1} (\ref{cor:1.1})). Even if the bundle is not spin, then on the total space $M^{\prime}$ of the $S^3$-bundle over $M=S^2 \times S^2$, we
 can construct a round fold map by applying Theorem \ref{thm:1} (we cannot apply Corollary \ref{cor:1} (\ref{cor:1.1})) and the resulting source manifold is not
 spin. More generally, let $a_1, a_2, b_1, b_2 \in \mathbb{Z}$ and consider the Whitney sum of
 two (oriented) real vector bundles of dimension $2$ whose Euler classes are $a_1\alpha+2b_1\beta$ and $a_2\alpha+ 2b_2\beta$, respectively. We immediately
 have the subbundle whose fiber is the
 unit sphere $S^3$ (we denote the total space by $M^{\prime}$), the Euler class of the bundle is $2a_1b_2+2a_2b_1$ times a generator of
 the cohomology group $H^4(M;\mathbb{Z}) \cong \mathbb{Z}$ and we have $H^4(M^{\prime};\mathbb{Z}) \cong \mathbb{Z}/|2a_1b_2+2a_2b_1|\mathbb{Z}$ by applying Proposition \ref{prop:4} (\ref{prop:4.1}). Moreover, we
 can construct a round fold map from $M^{\prime}$ into ${\mathbb{R}}^n$ by applying Theorem \ref{thm:1} or Corollary \ref{cor:1} (\ref{cor:1.1}).

Furthermore, let $M$ be the total space of a non-trivial
 $S^2$-bundle over $S^2$ in this situation, or more generally, in the situation of Proposition \ref{prop:1}, let $(m,n)=(4,2)$. By applying Theorem \ref{thm:1}, we can
 obtain explicit locally $C^{\infty}$ trivial round fold maps similarly. For the latter case, see also Example \ref{ex:6} (\ref{ex:6.1}) later.

\item
\label{ex:2.2}
In the situation of Example \ref{ex:1} {\rm (}\ref{ex:1.3}{\rm )}, let $(m,n)=(4,3)$. The source
 manifold $M:=S^2 \times S^2$ of dimenison $m=4$ admits a $C^{\infty}$ trivial and locally $C^{\infty}$ trivial round fold map
 into ${\mathbb{R}}^n={\mathbb{R}}^3$. We
 define $\alpha,\beta \in H^2(M;\mathbb{Z}) \cong {\mathbb{Z}}^2$ as in the example just
 before. Let $a_1, a_2, b_1, b_2 \in \mathbb{Z}$ and let the sum $a_1+a_2$ be even. Consider the Whitney sum of
 two (oriented) real vector bundles of dimension $2$ whose Euler classes are $a_1\alpha+b_1\beta$ and $a_2\alpha+b_2\beta$, respectively. Similarly, we immediately
 have the subbundle whose fiber is the
 unit sphere $S^3$ (we denote the total space by $M^{\prime}$), the Euler class of the bundle is $a_1b_2+a_2b_1$ times a generator of
 the cohomology group $H^4(M;\mathbb{Z}) \cong \mathbb{Z}$ and we have $H^4(M^{\prime};\mathbb{Z}) \cong \mathbb{Z}/|a_1b_2+a_2b_1|\mathbb{Z}$ by applying Proposition \ref{prop:4} (\ref{prop:4.1}). Moreover, we can construct
 a round fold map from $M^{\prime}$ into ${\mathbb{R}}^n$ by applying Theorem \ref{thm:1} or Corollary \ref{cor:1} (\ref{cor:1.1}).
\end{enumerate}
\end{Ex}

\subsection{Cases for round fold maps such that regular fibers are disjoint unions of spheres}

  In the previous subsection, we obtained new round fold maps by applying P-operations to some locally $C^{\infty}$ trivial round fold maps
 from $m$-dimensional round fold maps into ${\mathbb{R}}^n$ under the constraint that $4 \geq m \geq n \geq 2$ holds. As specific cases, we applied
 P-operations to locally $C^{\infty}$ trivial round fold maps in the former part of Example \ref{ex:1} (\ref{ex:1.2}) and (\ref{ex:1.3}) and Proposition \ref{prop:1}. Here, for
 general pairs $(m,n)$ of dimensions, we apply P-operations to round fold maps satisfying these conditions.

\begin{Thm}
\label{thm:2}
Let $m,n \in \mathbb{N}$. Let $m> n \geq 2$ hold. Let an $m$-dimensional connected closed
 manifold $M$ admit a locally $C^{\infty}$ trivial round fold map into ${\mathbb{R}}^n$ satisfying the
 following conditions as mentioned in Proposition \ref{prop:1}.

\begin{enumerate}
\item All the regular fibers of $f$ are disjoint unions of finite copies of $S^{m-n}$.
\item The number of connected components of $S(f)$ and the number of connected components of the fiber of a point in a proper core of $f$ coincide.
\item For any connected component $C$ of $f(S(f))$ and a small closed tubular neighborhood $N(C)$ of $C$, $f^{-1}(N(C))$ is regarded as the total space of
 a trivial bundle over $C$ as in Definition \ref{def:3} such that a normal
 fiber $F_C$ of $C$ corresponding to the bundle $f^{-1}(N(C))$ is homeomorphic to a disjoint union of a finite number of the following manifolds.
\begin{enumerate}
\item $D^{m-n+1}$.
\item $S^{m-n+1}$ with the interior
 of a union of disjoint three {\rm (}$m-n+1${\rm )}-dimensional standard
 closed discs removed.
\end{enumerate}
Thus, on any manifold $M^{\prime}$ regarded as the total space of a bundle over $M$ such that the restriction to any connected component of the fiber of a point in a proper core of $f$ is trivial, by
 a P-operation, we have a round fold map into ${\mathbb{R}}^n$.
\end{enumerate}

\end{Thm}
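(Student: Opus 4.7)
The plan is to apply Proposition~\ref{prop:2}, for which it suffices to show that for each connected component $C$ of $f(S(f))$ and each small closed tubular neighborhood $N(C)$ as in Definition~\ref{def:3}, the bundle $M'\to M$ restricted to $f^{-1}(N(C))$ is trivial. By the locally $C^{\infty}$ triviality of $f$ together with hypothesis~(3), each $f^{-1}(N(C))$ is identified with $S^{n-1}\times F_C$, where every connected component of $F_C$ is either $D^{m-n+1}$ or the pair-of-pants $P:=S^{m-n+1}\setminus 3\mathring{D}$, and every such component has a distinguished inner boundary copy of $S^{m-n}$ lying in the annulus closer to the proper core.

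My strategy is an induction outward from the proper core $P_0={D^n}_{1/2}$, establishing at the $k$-th stage that $M'$ is trivial on $f^{-1}({D^n}_{k+1/2})$. For the base case, $f^{-1}(P_0)\cong P_0\times F_0$ deformation retracts onto a regular fiber $F_0$, which is a disjoint union of $(m-n)$-spheres; by hypothesis the classifying map of $M'\to M$ is null-homotopic on each connected component of $F_0$, hence on $F_0$, hence on $f^{-1}(P_0)$. For the inductive step, I extend the trivialization across $f^{-1}(P_k)\cong S^{n-1}\times F_{C_k}$ one connected component of $F_{C_k}$ at a time, starting from the already-trivialized inner boundary $S^{n-1}\times\partial_0 F_{C_k}$. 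A $D^{m-n+1}$ component is straightforward: $S^{n-1}\times D^{m-n+1}$ deformation retracts onto $S^{n-1}$, and the classifying map $S^{n-1}\to BG$ (for the structure group $G$ of $M'\to M$) must be null-homotopic because it is recovered from the (trivial) inner-boundary classifying map by precomposition with a section of the projection $S^{n-1}\times S^{m-n}\to S^{n-1}$.

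The hard part is the pair-of-pants component. Because $P\simeq S^{m-n}\vee S^{m-n}$ is not contractible, triviality on a single boundary copy of $S^{m-n}$ does not automatically extend across $S^{n-1}\times P$. I would analyze the attaching data: $P$ is obtained from the cylinder $S^{m-n}\times I$ over its inner boundary by a single $(m-n)$-handle attachment that splits the outer sphere into two, so the obstruction to extending the bundle trivialization from the cylinder across this handle lies in a homotopy group of $G$ determined by the clutching of the trivialization on the attaching sphere. I expect this obstruction to vanish as a consequence of the hypothesis that $M'$ is trivial on every connected component of the proper-core fiber, together with the counting relation in hypothesis~(2) that matches the number of components of $S(f)$ with the number of components of the proper-core fiber and thereby constrains how pair-of-pants components must be assembled. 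Once both kinds of components are handled at each stage, the induction closes, the hypothesis of Proposition~\ref{prop:2} is verified, and the P-operation produces the desired round fold map on $M'$.
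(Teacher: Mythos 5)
Your reduction to Proposition \ref{prop:2}, the outward induction from the proper core $f^{-1}({D^n}_{1/2})$, the base case, and the treatment of the disc components of $F_C$ all match the paper's proof. The genuine gap is exactly where you flag ``the hard part'': for the pair-of-pants components you set up an obstruction (a clutching class in a homotopy group of the structure group) and then only ``expect'' it to vanish, which is not an argument. Worse, the configuration you describe --- $P$ obtained from the cylinder over a \emph{single} inner boundary sphere by a handle that splits the \emph{outer} boundary into two spheres --- is precisely the case in which the extension problem is genuinely obstructed: the inner sphere does not carry the homotopy type of $P\simeq S^{m-n}\vee S^{m-n}$, so triviality over $S^{n-1}\times\partial_0 F_C$ cannot by itself control the bundle over $S^{n-1}\times P$, and neither the proper-core hypothesis nor a vague appeal to hypothesis (2) closes this.

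The paper's resolution is structural rather than obstruction-theoretic: condition (2) (the count of components of $S(f)$ equalling the count of components of the proper-core fiber, as realized by the maps of Proposition \ref{prop:1}) forces every disc component of $F_C$ to meet $\partial_0 F_C$ in its entire boundary sphere and every pair-of-pants component to meet $\partial_0 F_C$ in \emph{two} of its three boundary spheres --- going outward, fiber components only die or merge, never split or get born. With that orientation each component of $F_C$ deformation retracts onto its inner boundary spheres joined by an arc, so triviality of $M^{\prime}$ over $S^{n-1}\times\partial_0 F_C$ (supplied by the inductive hypothesis, with the proper-core assumption as base case) propagates across $S^{n-1}\times F_C$ with no obstruction left to compute. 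To repair your proof you must extract and justify this combinatorial constraint from hypothesis (2) and then replace the clutching analysis by this homotopy-equivalence argument; as written, the crucial step is asserted, not proved.
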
 

\begin{proof}
We consider a normal form $f_0:M \rightarrow {\mathbb{R}}^n$ of the map $f$. If we restrict the bundle $M^{\prime}$ over $M$ to the inverse image $f^{-1}({D^n}_{\frac{1}{2}})$ of
 the proper core ${D^n}_{\frac{1}{2}}$, then it is trivial by the assumption on $M^{\prime}$.

   For any connected component $C$ of the singular value set $f(S(f))$, we denote a small closed tubular neighborhood as in Definition \ref{def:3} by $N(C)$
 and the normal fiber corresponding to the trivial bundle $f^{-1}(N(C))$ explained in Definition \ref{def:3} by $F_C$. In the definition, the subbundle
 of the bundle $f^{-1}(N(C))$ whose fiber is ${\partial}_0 F_C \subset F_C$ is defined and we assume that the restriction
 of the bundle $M^{\prime}$ over $M$ to the total space of this subbundle is trivial. Then, by the assumed conditions, the restriction of the bundle
 to $f^{-1}(N(C))$ is also trivial. More precisely, we have this fact as the following. 

  By the third condition, the fiber of any connected component of the bundle $f^{-1}(N(C))$ is
 $D^{m-n+1}$ or $S^{m-n+1}$ with the interior
 of a union of disjoint three {\rm (}$m-n+1${\rm )}-dimensional standard
 closed discs removed. By considering the intersection of the fiber of each connected component
 of the bundle $f^{-1}(N(C))$ and the fiber $F_C$ of the bundle $f^{-1}(N(C))$, we obtain a subbundle of each connected component
 of the bundle $f^{-1}(N(C))$. By the second condition, the fiber of the resulting bundle is homeomorphic to the sphere $S^{m-n}$ if the fiber of the connected
 component of $f^{-1}(N(C))$ is homeomorphic to $D^{m-n+1}$ and the fiber of the resulting bundle is homeomorphic to the disjoint union of two copies of the sphere $S^{m-n}$ if the fiber of the connected
 component of $f^{-1}(N(C))$ is homeomorphic to $S^{m-n+1}$ with the interior
 of a union of disjoint three {\rm (}$m-n+1${\rm )}-dimensional standard
 closed discs removed. By considering the homotopy types of the fibers of the connected components of the trivial bundle $f^{-1}(N(C))$, we have the desired fact.     

   By the induction, if we restrict the bundle $M^{\prime}$ over $M$ to $f^{-1}(N(C))$ for any connected component $C$ of the singular value set $f(S(f))$, then it is trivial. Thus we have the statement. 

\end{proof}

\begin{Ex}
\label{ex:3}
Let $m$ and $n$ be integers satisfying the relation $m>n \geq 2$. Let $M$ be a manifold regarded
 as the total space of a trivial $S^{m-n}$-bundle over $S^n$ and let $f:M \rightarrow {\mathbb{R}}^n$ be a
 locally $C^{\infty}$ trivial round fold map presented in Example \ref{ex:1} (\ref{ex:1.2}).
\begin{enumerate}
\item
\label{ex:3.1}
 We consider the Whitney sum of the pull-back
 of a trivial $k_1$-dimensional real vector bundle over $S^{m-n}$ by
 the canonical projection of the
 product $M=S^{m-n} \times S^n$ onto $S^{m-n}$ and the pull-back of a $k_2$-dimensional real vector bundle over $S^n$ by
 the canonical projection of the
 product $M=S^{m-n} \times S^n$ onto $S^n$. Let $M^{\prime}$ be the total space of the subbundle of the real vector bundle whose fiber is the ($k_1+k_2-1$)-dimensional unit
 sphere. Then, the bundle $M^{\prime}$ is trivial over the fiber of a point in a proper core of $f$. We can apply Theorem \ref{thm:2} to obtain a round fold map from $M^{\prime}$ into ${\mathbb{R}}^n$.

  In this situation, let $n=2$ hold.
 In addition, let $k_2=2$ hold. For any integer $k$, we can take the mentioned $2$-dimensional (oriented)
 real vector bundle over $S^n=S^2$ as a bundle
 whose Euler class is $k$ times a generator
 of the group $H^2(S^2;\mathbb{Z}) \cong \mathbb{Z}$. Furthermore, if $k$ is even (odd), then the resulting bundle $M^{\prime}$ is spin (resp. not spin) and
 the manifold $M^{\prime}$ is spin (resp. not spin). Let $k_2 >2$ hold. We consider the mentioned $k_2$-dimensional
 real vector bundle over $S^n=S^2$. Thus, if the mentioned $k_2$-dimensional
 real vector bundle is spin (not spin), then the resulting bundle $M^{\prime}$ is spin (resp. not spin) and
 the manifold $M^{\prime}$ is spin (resp. not spin). 
\item
\label{ex:3.2}
 We consider the Whitney sum of the pull-back
 of the tangent bundle over $S^{m-n}$ by the canonical projection of the
 product $M=S^{m-n} \times S^n$ onto $S^{m-n}$ and the pull-back of a $k$-dimensional real vector bundle over $S^n$ by the canonical projection of the
 product $M=S^{m-n} \times S^n$ onto $S^n$. Let $M^{\prime}$ be the total space of the subbundle of the real vetor bundle whose fiber is the ($m-n+k-1$)-dimensional unit
 sphere. Then, the bundle $M^{\prime}$ is trivial over the fiber of a point in a proper core of $f$. We can apply Thorem \ref{thm:2} to obtain a round fold map from $M^{\prime}$ into ${\mathbb{R}}^n$.

In this situation, let $m$ and $n$ be even and let $k=n$. Then, for a generator $\alpha$ of the cohomology group $H^m(M;\mathbb{Z}) \cong \mathbb{Z}$, we can construct the (oriented) bundles $M^{\prime}$ over $M$ whose Euler
 classes are $0$ and $4\alpha$. In fact, we can take the mentioned (oriented) $n$-dimensional real vector bundle over $S^n$ as a trivial bundle and also as a tangent
 bundle over $S^n$. Especially, if $n=2$ holds, then for any integer $l$, we can construct the bundles $M^{\prime}$ over $M$ whose Euler
 class is $2l\alpha$. In fact, we can take the mentioned oriented $2$-dimensional real vector bundle over $S^n=S^2$ as a bundle whose Euler class is $l$ times a generator
 of the group $H^2(S^2;\mathbb{Z}) \cong \mathbb{Z}$.
 
\item
\label{ex:3.3}
 Let $m \geq 4$ and let $n=m-2 \geq 2$ in this situation. We consider the Whitney sum of the pull-back
 of a $k_1$-dimensional vector bundle over $S^2$ which is spin by the canonical projection of the
 product $M=S^{m-n} \times S^n=S^{m-2} \times S^2$ onto $S^{m-n}=S^{m-2}$ and the pull-back of a $k_2$-dimensional real vector
 bundle over $S^{m-2}$ by the canonical projection of the
 product $M=S^{2} \times S^{m-2}$ onto $S^{m-2}$. Let $M^{\prime}$ be the total space of the subbundle of the
 real vector bundle whose fiber is the ($k_1+k_2-1$)-dimensional unit
 sphere. Then, the bundle $M^{\prime}$ is trivial over the fiber of a point in a proper core of $f$. We can apply Theorem \ref{thm:2} to obtain a round fold map from $M^{\prime}$ into ${\mathbb{R}}^n$.

As a specific case, let $n=m-2=4$ or $m=6$ and let $k_1=2$ and $k_2=m-2=6-2=4$. For any integer $k$, we can take a $k_1$-dimensional (oriented) real
 vector bundle whose Euler class is $k$ times a generator of the cohomology group $H^{2}(S^{2};\mathbb{Z}) \cong \mathbb{Z}$ of
 the base space $S^2$. For any integer $k$, we can take a $k_2$-dimensional (oriented) real
 vector bundle whose Euler class is $k$ times a generator of the cohomology group $H^{m-2}(S^{m-2};\mathbb{Z}) \cong H^{4}(S^4;\mathbb{Z}) \cong \mathbb{Z}$ of
 the base space $S^{m-2}=S^4$. We consider the Whitney sum of the pull-backs defined before.
 The resulting real vector bundle is of dimension $k_1+k_2=2+4=6$ and for any integer $k$, we can obtain
 this vector bundle so that the Euler class is $2k$ times a generator of the cohomology group $H^{m}(M;\mathbb{Z}) \cong H^{m}(S^2 \times S^{m-2};\mathbb{Z}) \cong \mathbb{Z}$.
 It follows that for any integer $k$, we can obtain the total space $M^{\prime}$ of the subbundle of the
 real vector bundle whose fiber is the $5$-dimensional unit sphere satisfying $H^6(M^{\prime};\mathbb{Z}) \cong \mathbb{Z}/|2k|\mathbb{Z}$.
\end{enumerate}
\end{Ex}

\begin{Thm}
\label{thm:3}

Let $m,n \in \mathbb{N}$. Let $m>n \geq 2$ hold. Let an $m$-dimensional connected closed
 manifold $M$ admit a locally $C^{\infty}$ trivial round fold map $f$ into ${\mathbb{R}}^n$ which
 is special generic and whose image is diffeomorphic to the cylinder $S^{n-1} \times [-1,1]$ as presented in Example \ref{ex:1} {\rm (}\ref{ex:1.3}{\rm )}. Thus, on
 any manifold $M^{\prime}$ regarded as the total space of a bundle over $M$ such that for any {\rm (}$n-1${\rm )}-dimensional standard sphere $C^{\prime}$ embedded
 in the interior of the image $f(M)$, the restriction to the image of a section of
 the trivial bundle given by $f {\mid}_{f^{-1}(C^{\prime})}:f^{-1}(C^{\prime}) \rightarrow C^{\prime}$ is trivial, by
 a P-operation to the original map $f$, we have a round fold map into ${\mathbb{R}}^n$.

\end{Thm}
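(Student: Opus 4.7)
The plan is to reduce Theorem \ref{thm:3} to Proposition \ref{prop:2}. By that proposition, it suffices to verify that for each connected component $C$ of the singular value set $f(S(f))$ and each sufficiently small closed tubular neighborhood $N(C)\subset\mathbb{R}^n$ of $C$ as in Definition \ref{def:3}, the restriction of the bundle $M^{\prime}\to M$ to $f^{-1}(N(C))$ is trivial. Once this is done, the construction carried out in the proof of Proposition \ref{prop:2} produces the desired locally $C^{\infty}$ trivial round fold map from $M^{\prime}$ into $\mathbb{R}^n$.

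The first thing I would do is unpack $f^{-1}(N(C))$ using the special generic hypothesis. Since every singular point of $f$ is of special generic type and the image $f(M)$ is diffeomorphic to $S^{n-1}\times[-1,1]$, the normal fiber $F_C$ of the trivial bundle $f^{-1}(N(C))$ is a single disc $D^{m-n+1}$, and $\partial_0 F_C \cong S^{m-n}$. Thus $f^{-1}(N(C))\cong C^{\prime}\times D^{m-n+1}$ as a trivial bundle, where $C^{\prime}$ is the component of $\partial N(C)$ lying in the interior of $f(M)$; it is a standard $(n-1)$-sphere, and the subbundle $f^{-1}(C^{\prime})\cong C^{\prime}\times S^{m-n}$ consists of the boundaries of the disc fibres. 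For any fixed $p_0\in\partial D^{m-n+1}$, the straight-line homotopy $(y,t)\mapsto(1-t)y+tp_0$ inside $D^{m-n+1}$ extends fibrewise to a deformation retraction of $f^{-1}(N(C))$ onto $C^{\prime}\times\{p_0\}$, which is exactly the image of the constant section $x\mapsto(x,p_0)$ of the trivial bundle $f|_{f^{-1}(C^{\prime})}\colon f^{-1}(C^{\prime})\to C^{\prime}$.

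The remaining step is formal. Smooth (indeed topological) bundles over a paracompact base are classified, up to isomorphism, by the homotopy type of the base, so the restriction of $M^{\prime}$ to $f^{-1}(N(C))$ is trivial if and only if its further restriction to the deformation retract $C^{\prime}\times\{p_0\}$ is trivial. The hypothesis of the theorem delivers exactly this, because $C^{\prime}$ is a standard $(n-1)$-sphere in the interior of $f(M)$, the bundle $f|_{f^{-1}(C^{\prime})}$ is indeed trivial (being a regular-fibre region of the $C^{\infty}$ trivial map $f$), and the image of the chosen section is $C^{\prime}\times\{p_0\}$. Doing this for every component $C$ of $f(S(f))$ and then applying Proposition \ref{prop:2} yields the claimed round fold map on $M^{\prime}$.

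The step I expect to require the most care is the bookkeeping aligning Definition \ref{def:3} with the present situation: one must verify that the interior-side component $C^{\prime}$ of $\partial N(C)$ is a standard $(n-1)$-sphere lying in the interior of $f(M)$, that the normal fiber $F_C$ really is a single disc in the special generic setting (so that the straight-line contraction just described is meaningful), and that this fibrewise contraction assembles into a genuine deformation retraction of the trivial bundle $f^{-1}(N(C))$ onto the image of a section. All three points are routine, but they are where the hypothesis of the theorem is actually matched to the hypothesis of Proposition \ref{prop:2}.
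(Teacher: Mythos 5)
Your proposal is correct and follows essentially the same route as the paper: reduce to Proposition \ref{prop:2}, observe that in the special generic case the normal fiber $F_C$ is a contractible disc $D^{m-n+1}$ so that $f^{-1}(N(C))$ deformation retracts onto the image of a section over $C^{\prime}$, and invoke the hypothesis plus homotopy invariance of bundle classification to get triviality of the restriction of $M^{\prime}$ to $f^{-1}(N(C))$. You merely spell out the fibrewise retraction more explicitly than the paper does; the one tiny slip is calling $f$ ``$C^{\infty}$ trivial'' when it is only assumed locally $C^{\infty}$ trivial, which does not affect the argument since the triviality of $f|_{f^{-1}(C^{\prime})}$ is part of the theorem's setup.
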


\begin{proof}
Let $C$ be a connected component of the singular value set $f(S(f))$ and we take a
 small closed tubular neighborhood $N(C)$ as presented in Definition \ref{def:3}. The obtained bundle $f^{-1}(N(C))$ is a trivial bundle and the normal
 fiber $F_C$ corresponding to the bundle $f^{-1}(N(C))$ is the ($m-n+1$)-dimensional standard closed disc $D^{m-n+1}$. By the extra
 assumption on the bundle $M^{\prime}$ over $M$ that for any {\rm (}$n-1${\rm )}-dimensional standard sphere $C^{\prime}$ embedded
 in the interior of the image $f(M)$, the restriction to the image of a section of
 the trivial bundle given by $f {\mid}_{f^{-1}(C^{\prime})}:f^{-1}(C^{\prime}) \rightarrow C^{\prime}$ is trivial, the fact that the previous trivial bundle
 is a subbundle of the bundle $f^{-1}(N(C))$ 
 and the fact that the fiber of the trivial bundle $f^{-1}(N(C))$ is diffeomorphic to the disc $D^{m-n+1}$ and contractible, if we restrict
 the bundle $M^{\prime}$ to $f^{-1}(N(C))$, then it is trivial. This completes the proof. 
\end{proof}

\begin{Ex}
\label{ex:4}
Let $m$ and $n$ be integers satisfying the relation $m>n \geq 2$ as assumed in Example \ref{ex:3}. Let $\Sigma$ be
 an ($m-n+1$)-dimensional homotopy sphere and let $M=\Sigma \times S^{n-1}$.
 We consider a round fold map $f:M \rightarrow {\mathbb{R}}^n$ presented in Example \ref{ex:1} (\ref{ex:1.3}) or in the assumption of Theorem \ref{thm:3}.
\begin{enumerate}
\item
\label{ex:4.1}
   We consider
 the Whitney sum of the pull-back
 of a $k_1$-dimensional real vector bundle over $\Sigma$ by the canonical projection of the product $M=\Sigma \times S^{n-1}$ onto $\Sigma$ and the pull-back of a trivial $k_2$-dimensional
 real vector bundle over $S^{n-1}$ by
 the canonical projection of the
 product $M=\Sigma \times S^{n-1}$ onto $S^{n-1}$. Let $M^{\prime}$ be the total space of the subbundle of the real vector bundle whose fiber
 is the ($k_1+k_2-1$)-dimensional unit
 sphere. Then, the bundle $M^{\prime}$ is trivial over the image of the section of the bundle $f^{-1}({C}^{\prime})$
 in Theorem \ref{thm:3} and we can apply Theorem \ref{thm:3}.

\item
\label{ex:4.2}
 We consider the Whitney sum of the pull-back
 of the tangent bundle of $S^{n-1}$ by the canonical projection of the
 product $M=\Sigma \times S^{n-1}$ onto $S^{n-1}$ and the pull-back of a $k$-dimensional real vector bundle over $\Sigma$ by the canonical projection of the
 product $M=\Sigma \times S^{n-1}$ onto $\Sigma$. Let $M^{\prime}$ be the total space
 of the subbundle of the real vector bundle whose fiber is the ($n+k-1$)-dimensional unit
 sphere. Then, the bundle $M^{\prime}$ is trivial over the image of the section of the bundle $f^{-1}({C}^{\prime})$ in Theorem \ref{thm:3} and we can apply Theorem \ref{thm:3}.

In this situation, let $m$ and $n-1$ be even. Then, for a generator $\alpha$ of the homology group $H^m(M;\mathbb{Z}) \cong \mathbb{Z}$, we can construct the bundles $M^{\prime}$ over $M$ whose Euler
 classes are $0$ and $4\alpha$. In fact, we can take the mentioned $k$-dimensional (oriented) real vector bundle over $\Sigma$ as a trivial
 bundle of dimension $k=m-n+1$ and also as a tangent
 bundle of $\Sigma$, which is of dimension $k=m-n+1$. Especially, if $m-(n-1)=2$ or $m-n=1$ holds, then for any integer $l$, we can construct the bundle $M^{\prime}$ over $M$ whose Euler
 class is $2l\alpha$. In fact, we can take the mentioned $k$-dimensional (oriented) real vector bundle over $S^2$ as a real vector bundle of
 dimension $k=2$ whose Euler class is $l$ times a generator
 of the group $H^2(S^2;\mathbb{Z}) \cong \mathbb{Z}$. In addition, if $l$ is odd (even), then the bundle $M^{\prime}$ is spin (resp. not spin) and the manifold $M^{\prime}$ is spin (resp. not spin).  

\item
\label{ex:4.3}
 Let $n=3$. We consider the Whitney sum of the pull-back of a $k_1$-dimensional vector bundle over $S^{n-1}=S^2$ which is spin by the canonical projection of the
 product $M=S^{m-2} \times S^2$ onto $S^2$ and the pull-back of a $k_2$-dimensional real vector
 bundle over $S^{m-2}$ by the canonical projection of the
 product $M=S^{m-2} \times S^2$ onto $S^{m-2}$. Let $M^{\prime}$ be the total space of the subbundle
 of the real vector bundle whose fiber is the ($k_1+k_2-1$)-dimensional unit
 sphere. Then, the bundle $M^{\prime}$ is trivial over the image of the section of the bundle $f^{-1}({C}^{\prime})$ in Theorem \ref{thm:3} and we can apply Theorem \ref{thm:3}.
\end{enumerate}
\end{Ex} 

\subsection{Cases for round fold maps into the plane}

  In \cite{kitazawa5}, we have obtained a lot of round fold maps by P-operations by the circle $S^1$ to a locally $C^{\infty}$ trivial round
 fold map into the plane. In this paper, we construct such maps on manifolds regarded as the total spaces of (more general) linear bundles by P-operations.

 We introduce a class of round fold maps first introduced in \cite{kitazawa2}.

\begin{Def}
\label{def:4}
Let $f:M \rightarrow {\mathbb{R}}^n$ be a round fold map, and let $R$ be a commutative group.

Let $P$ be a proper core of $f$. Then, $f^{-1}({\mathbb{R}}^n-{\rm Int} P)$ has a bundle structure mentioned
 just before Definition \ref{def:2}. 
$f$ is said to be {\it homologically $R$-trivial} if for a bundle $f^{-1}({\mathbb{R}}^n-{\rm Int} P)$, the following diagram commutes for
 the canonical projection $p:\partial P \times f^{-1}(L) \rightarrow \partial P$, the projection
 of the bundle $\pi:f^{-1}({\mathbb{R}}^n-{\rm Int} P) \rightarrow \partial P$ and two isomorphisms of
 homology groups $\Phi$ and $\phi$ for any integer $j$. 

$$
\begin{CD}
H_j({E}^{\prime};R) @> \Phi  >>H_j(\partial P \times f^{-1}(L);R) \\
@VV {\pi}_{\ast} V @VV p_{\ast} V \\
H_j(\partial P;R) @> \phi >> H_j(\partial P;R) \\
\end{CD}
$$

\end{Def}

  We have the following.

\begin{Thm}
\label{thm:4}
Let $M$ be a closed manifold of
 dimension $m \geq 2$ and let $f:M \rightarrow {\mathbb{R}}^2$ be a locally $C^{\infty}$ trivial and homologically $\mathbb{Z}$-trivial round fold map
 such that for any connected component $C$ of the singular
 value set $f(S(f))$, a small closed tubular neighborhood $N(C)$ and $F_C$ as
 in Definition \ref{def:3}, $H^1(F_C;\mathbb{Z}) \cong H^2(F_C;\mathbb{Z}) \cong \{0\}$ holds. 
Thus, we
 have a family of manifolds $\{M_K\}_{K \in H_1(f^{-1}(L);\mathbb{Z}/2\mathbb{Z}) \oplus H_2(f^{-1}(L);\mathbb{Z}/2\mathbb{Z})}$ regarded
 as the total spaces of linear bundles over $M$ and a family of
 round fold maps $\{f_K:M_K \rightarrow {\mathbb{R}}^n\}_{K \in H_1(f^{-1}(L);\mathbb{Z}/2\mathbb{Z}) \oplus H_2(f^{-1}(L);\mathbb{Z}/2\mathbb{Z})}$.

 Furthermore, we have the following three statements.

\begin{enumerate}
\item
\label{thm:4.1}
 We have the linear bundles $M_K$ as standard linear bundles whose fibers are the standard sphere $S^k$ of dimension $k>1$ and we can construct
 the maps $f_K$ so that they are homologically $\mathbb{Z}$-trivial.
\item
\label{thm:4.2}
 Let the manifold $f^{-1}(L)$ be spin and for a proper core $P$ of $f$, the manifold $f^{-1}({\mathbb{R}}^n-{\rm Int} P)$
 be not spin. In this case, $f$ is not $C^{\infty}$ trivial. If the linear bundles $M_K$ are standard linear bundles
 whose fibers are diffeomorphic to the standard sphere $S^k$ of dimension $k>1$, then for an element
 $K \in H_1(f^{-1}(L);\mathbb{Z}/2\mathbb{Z}) \oplus H_2(f^{-1}(L);\mathbb{Z}/2\mathbb{Z})$, the map $f_K$
 is not $C^{\infty}$ trivial.
\item
\label{thm:4.3}
 Let the manifolds $f^{-1}(L)$ and $f^{-1}({\mathbb{R}}^n-{\rm Int} P)$ be spin. In this case, we can
 construct the map $f_K$ so that it is not $C^{\infty}$ trivial for any
 element $K=(c,0) \in H_1(f^{-1}(L);\mathbb{Z}/2\mathbb{Z}) \oplus H_2(f^{-1}(L);\mathbb{Z}/2\mathbb{Z})$ where $c \in H_1(f^{-1}(L);\mathbb{Z}/2\mathbb{Z})$ is not zero.  
\end{enumerate}
\end{Thm}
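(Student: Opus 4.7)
The plan is to build each linear bundle $M_K$ over $M$ by a clutching construction that makes $M_K$ trivial on $f^{-1}(P)$ (where $P$ is the proper core) and on each $f^{-1}(N(C))$, concentrating all nontriviality in the annular regions between these pieces. Then Proposition \ref{prop:2} directly yields $f_K \colon M_K \to \mathbb{R}^2$. The index $K = (c_1, c_2) \in H_1(f^{-1}(L); \mathbb{Z}/2\mathbb{Z}) \oplus H_2(f^{-1}(L); \mathbb{Z}/2\mathbb{Z})$ is set up to parametrize the homotopy classes of clutching functions modulo bundle isomorphism.

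First I would fix a decomposition $M = f^{-1}(P) \cup \bigcup_i f^{-1}(N(C_i)) \cup f^{-1}(R)$, where $R$ consists of the annular regions between the singular-value circles. On $f^{-1}(P)$ and each $f^{-1}(N(C_i))$, set $M_K$ to be the trivial $S^k$-bundle. The bundle $M_K$ is then determined by the clutching data on the overlaps, namely maps from each overlap (diffeomorphic to $S^1$ times a fiber of $f$) into $O(k+1)$. Since $k>1$ one has $\pi_0(O(k+1)) \cong \pi_1(O(k+1)) \cong \mathbb{Z}/2\mathbb{Z}$, so by obstruction theory these homotopy classes are detected by cohomology in degrees $1$ and $2$ of the overlap. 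Combining homological $\mathbb{Z}$-triviality of $f$ with the hypotheses $H^1(F_C;\mathbb{Z}) = H^2(F_C;\mathbb{Z}) = 0$ and universal coefficients, this obstruction group becomes $H^1(f^{-1}(L);\mathbb{Z}/2\mathbb{Z}) \oplus H^2(f^{-1}(L);\mathbb{Z}/2\mathbb{Z})$, which by universal coefficients is canonically dual to the parametrizing group $H_1(f^{-1}(L);\mathbb{Z}/2\mathbb{Z}) \oplus H_2(f^{-1}(L);\mathbb{Z}/2\mathbb{Z})$ of the statement. Proposition \ref{prop:3} then realizes the chosen class as $(w_1(M_K), w_2(M_K))$ of a standard linear $S^k$-bundle.

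With $M_K$ thus constructed and trivial on each $f^{-1}(N(C))$ by design, Proposition \ref{prop:2} produces the locally $C^\infty$ trivial round fold map $f_K$. For statement (\ref{thm:4.1}), homological $\mathbb{Z}$-triviality of $f_K$ follows because the Gysin sequence of $S^k \hookrightarrow M_K \to M$ combined with triviality of $M_K$ over the inner disc and over each $f^{-1}(N(C))$ identifies the outer piece of $f_K$ homologically with $S^1 \times f_K^{-1}(L)$, inheriting the homological $\mathbb{Z}$-triviality of $f$.

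For (\ref{thm:4.2}) and (\ref{thm:4.3}) I would argue by contradiction using Proposition \ref{prop:4}. In (\ref{thm:4.2}), $C^\infty$-triviality of $f$ would give $f^{-1}(\overline{\mathbb{R}^2 - \mathrm{Int}\,P}) \cong \partial P \times f^{-1}(L)$, whence Proposition \ref{prop:4} (\ref{prop:4.2})--(\ref{prop:4.3}) would transport spinness of $f^{-1}(L)$ to the outer piece, contradicting the hypothesis. An analogous argument shows $f_K$ cannot be $C^\infty$ trivial: such triviality, combined with the $k > 1$ assumption and Proposition \ref{prop:4} (\ref{prop:4.3}) (which makes standard linear $S^k$-bundles spin-transparent), would force the outer piece of $f$ itself to be a trivial product and hence spin. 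In (\ref{thm:4.3}), taking $K = (c, 0)$ with $c \ne 0$ arranges $w_1(M_K)$ to be nonzero on a loop inside some $f^{-1}(L)$, so $M_K$ is non-orientable along that loop; this obstruction prevents any trivialization of the outer piece of $f_K$. The hardest step will be this propagation of (non-)spin and (non-)orientability obstructions from $M_K$ down to $M$ through the bundle structure, where one must precisely match how the characteristic classes of $M_K$ interact with the Whitney-sum decomposition of $TM_K$ supplied by Proposition \ref{prop:4}.
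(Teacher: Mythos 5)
Your overall shape (make the bundle trivial over each $f^{-1}(N(C))$ so that Proposition \ref{prop:2} applies, then detect non-$C^{\infty}$-triviality with Proposition \ref{prop:4}) matches the paper, but there are genuine gaps in how you realize the parameter $K$ and in the obstruction arguments. The paper does not use clutching into $O(k+1)$: it uses homological $\mathbb{Z}$-triviality to identify $H_2(f^{-1}({\mathbb{R}}^2-{\rm Int}\,P);\mathbb{Z}/2\mathbb{Z})$ with $H_2(\partial P \times f^{-1}(L);\mathbb{Z}/2\mathbb{Z})$, sends $K=(c_1,c_2)$ to the class $k_0\otimes c_1$ (cross product with the generator $k_0$ of $H_1(\partial P)$) plus the fiberwise class $c_2$, takes the Poincar\'e dual, and realizes it as the Euler/second Stiefel--Whitney class of an $SO(2)$-bundle via Proposition \ref{prop:3} (\ref{prop:3.2}); triviality over $f^{-1}(N(C))$ then comes for free from $H^2(f^{-1}(N(C));\mathbb{Z})\cong H^1(F_C;\mathbb{Z})\oplus H^2(F_C;\mathbb{Z})\cong\{0\}$ and Proposition \ref{prop:3} (\ref{prop:3.3}). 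Your reading of the first summand $H_1(f^{-1}(L);\mathbb{Z}/2\mathbb{Z})$ as controlling $w_1$ of $M_K$ is therefore not the intended parametrization, and your classification of clutching maps by $H^1\oplus H^2$ of the overlap is also not justified ($[X,O(k+1)]$ is not a sum of cohomology groups in general; there are higher obstructions).

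This mismatch is fatal for statement (\ref{thm:4.3}). The invariant you need must distinguish $f_K^{-1}({\mathbb{R}}^2-{\rm Int}\,P_K)$ from the product $\partial P_K\times f_K^{-1}(L_K)$, so it must behave asymmetrically on the two. If $w_1(M_K)$ is nonzero on a loop lying inside $f^{-1}(L)$, then the restriction of $M_K$ to $f^{-1}(L)$ is already non-orientable, so $f_K^{-1}(L_K)$ and hence $\partial P_K\times f_K^{-1}(L_K)$ are non-orientable along with the outer piece: there is no contradiction, and your argument proves nothing. The correct mechanism is that for $K=(c,0)$ with $c\neq 0$ the class $k_0\otimes c$ is nonzero on the outer piece but restricts to zero on $\{{\rm pt}\}\times f^{-1}(L)$; hence $f_K^{-1}(L_K)$ stays spin while the outer piece of $f_K$ fails to be spin (Proposition \ref{prop:4} (\ref{prop:4.3})), and a $C^{\infty}$ trivialization would exhibit the non-spin outer piece as $S^1\times(\text{spin manifold})$, a contradiction. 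Your argument for (\ref{thm:4.2}) also contains a non sequitur: $C^{\infty}$-triviality of $f_K$ does not ``force the outer piece of $f$ itself to be a trivial product'' (a product structure upstairs does not descend to the base of the bundle). The paper instead argues directly on $f_K$: already for $K=0$ the outer piece of $f_K$ is $f^{-1}({\mathbb{R}}^2-{\rm Int}\,P)\times S^k$, which is not spin, while $f_K^{-1}(L_K)=f^{-1}(L)\times S^k$ is spin, and the same product-versus-spin contradiction applies. You should rework the construction and both obstruction arguments along these lines.
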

\begin{proof}

From the assumption that $H^1(F_C;\mathbb{Z}) \cong H^2(F_C;\mathbb{Z}) \cong \{0\}$ holds, we
 have $H^2(f^{-1}(N(C)) \times F_C;\mathbb{Z}) \cong \{0\}$. Here, we consider the situation of Definition \ref{def:4} and
 abuse notation there. Let $k_0 \in H_1(\partial P;R) \cong \mathbb{Z}$ be a generator of the group, For $K \in H_1(f^{-1}(L);\mathbb{Z}/2\mathbb{Z})$, let us regard the tensor product $k_0 \otimes K$ 
 as an element of $H_2(\partial P \times f^{-1}(L);R)$ by considering the natural identification and we denote the 2nd homology
 class ${\Phi}^{-1}(k_0 \otimes K) \in H_2(f^{-1}({\mathbb{R}}^n-{\rm Int} P);\mathbb{Z}/2\mathbb{Z})$ by $K^{\prime}$.
 
\ \ \ By applying Proposition \ref{prop:3} (\ref{prop:3.2}), we can obtain a manifold $M_K$ regarded as the total space
 of a linear bundle whose structure group is $SO(2)$ such that the 2nd Stiefel-Whitney class is the dual of $K^{\prime} \in H_2(f^{-1}({\mathbb{R}}^n-{\rm Int} P);\mathbb{Z}/2\mathbb{Z})$ and construct
 the desired round fold map $f_K:M_K \rightarrow {\mathbb{R}}^n$. By constructing the manifold $M_K$ as the total space of a standard linear bundle whose fiber
 is diffeomorphic to $S^k$ satisfying $k>1$, we
 easily have the first statement of the latter three statements too. The former part of the second statement of the latter three statements is clear. In the
 situation of this statement, there exist a class $K$ and the corresponding manifold $M_K$ regarded as the total space of a linear bundle
 whose structure group is $SO(2)$ such that the 2nd Stiefel-Whitney class of the bundle is the dual of $K^{\prime} \in H_2(f^{-1}({\mathbb{R}}^n-{\rm Int} P);\mathbb{Z}/2\mathbb{Z})$
 and that this 2nd Stifel-Whitney class and that of the tangent bundle of the manifold $f^{-1}({\mathbb{R}}^n-{\rm Int} P)$ do not
 coincide (set $K=0$ for example). Thus, we have a round fold map $f_K:M_K \rightarrow {\mathbb{R}}^n$ which is not $C^{\infty}$ trivial since the total space of the bundle obtained by
 the restriction of the
 bundle $M_K$ to $f^{-1}({\mathbb{R}}^n-{\rm Int} P)$ is not spin by Proposition \ref{prop:4} (\ref{prop:4.3}). Last, in the situation of the
 last statement of the three statements, the resulting manifold $M_K$ is not spin, for an proper core $P_K$ of the resulting round fold map $f_K$, the inverse
 image $f^{-1}({\mathbb{R}}^n-{\rm Int} P_K)$ is not spin, and for an axis $L_K$ of the resulting map $f_K:M_K \rightarrow {\mathbb{R}}^n$,
 the inverse image ${f_k}^{-1}(L_K)$ is spin. Thus, we obtain the last statement of the latter three.
\end{proof}

\begin{Ex}
\label{ex:5}
\begin{enumerate}
\item
\label{ex:5.1}
 In the situation of the explanation of a locally trivial spinning construction
 introduced after Definition \ref{def:3}, let $n=2$ and $l^{\prime}=3$. Moreover, let $E_1$ and $E_2$ be a manifold homeomorphic to the standard sphere of dimension $k \geq 4$ with the
 interior of disjoint three smoothly embedded $k$-dimensional standard closed discs removed; moreover, let $F_2$ be
 the disjoint union of two copies of a ($k-1$)-dimensional homotopy sphere and let $F_1$ and $F_3$ be a ($k-1$)-dimensional homotopy sphere and let
 $E_3$ be a $k$-dimensional standard closed disc. By performing the construction, we have a round fold map satisfying
 the assumption of Theorem \ref{thm:4}. 
\item
\label{ex:5.2}
 In the situation of Example \ref{ex:1} or Theorem \ref{thm:3}, let $m=n=2$. Then $M$ is the torus $S^1 \times S^1$. We may apply
 Theorem \ref{thm:3} or \ref{thm:4} to obtain a round fold map from a manifold $M^{\prime}$ regarded as the total space of a standard linear bundle
 over the torus $M$ whose fiber is diffeomorphic to the standard sphere of dimension $k \geq 3$; especially, we can take the linear bundle and the manifold $M^{\prime}$ that are not spin. Furthermore, we
 can obtain a resulting round fold map satisfying the assumption of the previous exmaple by a P-operation. This resulting map satisfies the assumption of Theorem \ref{thm:4} (\ref{thm:4.2}).
\end{enumerate}
\end{Ex}

\subsection{Other cases}

First, by applying Proposition \ref{prop:4} (\ref{prop:4.2}), we easily have the following proposition.

\begin{Prop}
\label{prop:5}
Let $X_1$ and $X_2$ be topological spaces and let ${\pi}_i:X_1 \times X_2 \rightarrow X_i$ be the canonical
 projection {\rm (}$i=1,2${\rm )}. Let $B_i$ be regarded as the total space of a real
 vector bundle over $X_i$. Assume also that the following two hold.
\begin{enumerate}
\item The vector bundle $B_1$ over $X_1$ is trivial.
\item The Whitney sum of the vector bundle $B_2$ over $X_2$ and a trivial real vector bundle over $X_2$ of dimension not larger than
 that of the vector bundle $B_1$ is a trivial vector bundle over $X_2$.
\end{enumerate}
Then, the Whitney sum of the vector bundle over $X_1 \times X_2$ defined as the pull-back of the bundle $B_1$ by the projection ${\pi}_1$ and
 the bundle defined as the pull-back of the bundle $B_2$ by the projection ${\pi}_2$ is a trivial vector bundle over $X_1 \times X_2$.  
\end{Prop}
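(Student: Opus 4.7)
The plan is to establish the triviality of the Whitney sum by reducing the statement to manipulations with trivial vector bundles, exploiting that pulling back commutes with Whitney summing and preserves the class of trivial bundles.

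First, let $k_1$ denote the dimension of $B_1$. Hypothesis (1) says that $B_1$ is trivial, so its pull-back ${\pi_1}^{\ast} B_1$ is a trivial $k_1$-dimensional real vector bundle over $X_1 \times X_2$. Hypothesis (2) furnishes an integer $j$ with $0 \leq j \leq k_1$ and an isomorphism exhibiting the Whitney sum of $B_2$ with a trivial $j$-dimensional bundle over $X_2$ as a trivial vector bundle over $X_2$. Applying ${\pi_2}^{\ast}$ to this isomorphism and using that pull-back sends Whitney sums to Whitney sums and trivial bundles to trivial bundles, we conclude that the Whitney sum of ${\pi_2}^{\ast} B_2$ with a trivial $j$-dimensional vector bundle over $X_1 \times X_2$ is trivial.

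To finish, we observe that ${\pi_1}^{\ast} B_1 \oplus {\pi_2}^{\ast} B_2$ is isomorphic to ${\pi_2}^{\ast} B_2$ Whitney summed with a trivial $k_1$-dimensional bundle over $X_1 \times X_2$, which splits further as ${\pi_2}^{\ast} B_2$ Whitney summed with a trivial $j$-dimensional bundle and with an additional trivial $(k_1 - j)$-dimensional bundle. The first two summands form a trivial vector bundle by the previous step, and Whitney summing that trivial bundle with another trivial bundle again yields a trivial bundle, so the whole Whitney sum is trivial, as asserted. The only real work is bookkeeping of dimensions and invoking hypothesis (2) in the correct form; no deep input beyond the stability of trivial bundles under pull-back and Whitney summing is needed, in the spirit of Proposition \ref{prop:4} (\ref{prop:4.2}).
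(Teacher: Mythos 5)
Your argument is correct and is exactly the computation the paper leaves implicit (the paper offers no written proof, only the remark that the proposition follows easily from the Whitney-sum-of-pullbacks observation): you pull back hypothesis (2) along $\pi_2$, replace ${\pi_1}^{\ast}B_1$ by a trivial bundle of dimension $k_1 \geq j$, and regroup the trivial summands. Nothing is missing; the dimension bookkeeping with $0 \leq j \leq k_1$ is precisely the point of the hypothesis that the trivial summand has dimension not larger than that of $B_1$.
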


By virtue of Propositions \ref{prop:2} and \ref{prop:5}, we immediately have the following.

\begin{Prop}
\label{prop:6}
Let $M$ be a closed manifold of dimension $m \geq 2$, let $f:M \rightarrow {\mathbb{R}}^n$ {\rm (}$m \geq n \geq 2${\rm )} be a locally $C^{\infty}$ trivial round
 fold map. Let $M^{\prime}$ be a manifold regarded as the total space of a standard linear bundle over $M$ such that
 the following two hold.
\begin{enumerate}
\item For any connected component $C$ of the singular value set $f(S(f))$ and a small closed tubular
 neighborhood $N(C)$ as in Definition \ref{def:3}, the restriction of the bundle $M^{\prime}$ to $f^{-1}(N(C))$, which is regarded as
 the total space of a trivial bundle over $C$, is equivalent to the Whitney sum of the following two
 real vector bundles $E_1$ and $E_2$, where $F_C$ is
 the normal fiber corresponding to
 the trivial bundle $f^{-1}(N(C))$.
\begin{enumerate}
\item The pull-back $E_1$ of a real vector bundle over $C$ by the projection of the trivial bundle $f^{-1}(N(C))$ over $C$.
\item The pull-back $E_2$ of a real vector bundle
 over $F_C$ by the canonical projection of $f^{-1}(N(C))$, regarded as $C \times F_C$, onto $F_C$. 
\end{enumerate}
\item One of the previous two bundles $E_1$ {\rm (}$E_2${\rm )} is trivial and the Whitney sum of the other bundle $E_2$ {\rm (}resp. $E_1${\rm )} and
 a trivial real vector bundle of dimension
 not larger than that of the trivial real vector bundle $E_1$ {\rm (}resp. $E_2${\rm )} is trivial.  
\end{enumerate}
 Then, by a
 P-operation to the map $f$, we can construct a locally $C^{\infty}$ trivial round fold map $f^{\prime}:M^{\prime} \rightarrow {\mathbb{R}}^n$.
\end{Prop}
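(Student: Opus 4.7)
The plan is to apply Proposition \ref{prop:2} after establishing, for each connected component $C$ of the singular value set, that the restricted bundle $M^{\prime}|_{f^{-1}(N(C))}$ is trivial; this triviality will in turn be extracted from Proposition \ref{prop:5}. So the whole argument is a concatenation of the two previously proved propositions, preceded by a passage from the standard linear bundle to its associated real vector bundle.

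First, I would reduce to the local statement. Let $F$ denote the common fiber of the standard linear bundle $M^{\prime} \to M$; thus $F$ is a standard sphere $S^k$ for some $k \geq 0$, and $F$ is a closed manifold. By Proposition \ref{prop:2}, to produce the desired locally $C^{\infty}$ trivial round fold map $f^{\prime}:M^{\prime} \to {\mathbb{R}}^n$ by a P-operation by $F$ to $f$, it suffices to verify that for every connected component $C$ of $f(S(f))$ and a small closed tubular neighborhood $N(C)$ as in Definition \ref{def:3}, the restriction of $M^{\prime}$ to $f^{-1}(N(C))$ is trivial.

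Second, I would translate this question into one about real vector bundles. Since $M^{\prime}$ is a standard linear bundle, there exists an associated real vector bundle $V \to M$ of dimension $k+1$ whose unit sphere subbundle is $M^{\prime}$. Because a trivialization of a real vector bundle restricts to a trivialization of its unit sphere subbundle, it is enough to show that $V|_{f^{-1}(N(C))}$ is a trivial real vector bundle. Recalling that $f^{-1}(N(C))$ is the total space of a trivial bundle over $C$ with fiber $F_C$, we can identify $f^{-1}(N(C)) = C \times F_C$, so we are asking for triviality of a real vector bundle over a product of two topological spaces.

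Third, I would apply Proposition \ref{prop:5}. By hypothesis (1) of the present proposition, $V|_{f^{-1}(N(C))}$ decomposes as the Whitney sum $E_1 \oplus E_2$, where $E_1$ is the pullback of a real vector bundle $B_1$ over $C$ and $E_2$ is the pullback of a real vector bundle $B_2$ over $F_C$. Hypothesis (2) states that one of $B_1, B_2$ is trivial, and that the Whitney sum of the other with a trivial real vector bundle of dimension not exceeding that of the trivial factor is trivial. This is exactly the input required by Proposition \ref{prop:5} with $(X_1, X_2) = (C, F_C)$ or $(F_C, C)$, chosen so that $X_1$ corresponds to the trivial factor. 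Proposition \ref{prop:5} then yields that $E_1 \oplus E_2 = V|_{f^{-1}(N(C))}$ is a trivial vector bundle; passing to the unit sphere subbundle, the restriction of $M^{\prime}$ to $f^{-1}(N(C))$ is trivial. Since $C$ was arbitrary, Proposition \ref{prop:2} produces the desired locally $C^{\infty}$ trivial round fold map $f^{\prime}:M^{\prime} \to {\mathbb{R}}^n$, completing the proof.

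There is no serious obstacle; the main content of the argument has already been done inside Propositions \ref{prop:2} and \ref{prop:5}. The only point requiring care is the bookkeeping step of correctly matching the roles of the two factors $E_1, E_2$ with the two roles $B_1, B_2$ in Proposition \ref{prop:5}, together with the essentially trivial observation that triviality of a vector bundle implies triviality of its unit sphere subbundle, which legitimizes passing between the standard linear bundle $M^{\prime}$ and its associated vector bundle $V$.
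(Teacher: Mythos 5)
Your proof is correct and follows exactly the route the paper intends: the paper gives no written proof beyond the remark that the proposition follows immediately from Propositions \ref{prop:2} and \ref{prop:5}, and your argument is precisely that concatenation, with the additional (correct and worthwhile) bookkeeping of passing between the standard linear bundle and its associated vector bundle. Nothing further is needed.
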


We have the following theorem.

\begin{Thm}
\label{thm:5}
Let $m,n \in \mathbb{N}$ and $m \geq n \geq 2$. Let $M$ be a closed manifold of dimension $m$ and let $f:M \rightarrow {\mathbb{R}}^n$ be a locally $C^{\infty}$ trivial round
 fold map. Let $M^{\prime}$ be a manifold regarded as the total space of a standard linear bundle over $M$ such that
 for any connected component $C$ of the singular value set $f(S(f))$ and a small closed tubular neighborhood $N(C)$, the restriction
 of the bundle $M^{\prime}$ over $M$ to $f^{-1}(N(C))$ as in Definition \ref{def:3} is equivalent to the unit tangent bundle $UTf^{-1}(N(C))$ of $f^{-1}(N(C))$.
Assume that either of the following two holds.
\begin{enumerate}
\item $n=2,4,8$ and the Whitney sum of the tangent bundle of the normal fiber $F_C$ corresponding to the bundle $f^{-1}(N(C))$ and a trivial real vector bundle of dimension $n-1$ is trivial. 
\item The tangent bundle of the normal fiber $F_C$ corresponding to the bundle $f^{-1}(N(C))$ is trivial. In other word, $F_C$ is parallelizable.
\end{enumerate}
Then, by a P-operation by $S^{m-1}$ to the map $f$, we can obtain a locally $C^{\infty}$ trivial round fold map $f^{\prime}:M^{\prime} \rightarrow {\mathbb{R}}^n$.
\end{Thm}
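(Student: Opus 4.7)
The plan is to reduce the theorem to Proposition \ref{prop:2} (or rather its enhanced form in Proposition \ref{prop:6}) by showing that in both cases the hypothesis on $M'$ forces the restriction of the standard linear bundle $M'$ to $f^{-1}(N(C))$ to be trivial for every connected component $C$ of $f(S(f))$. Once triviality is established, the P-operation by $S^{m-1}$ produces the desired locally $C^\infty$ trivial round fold map $f':M'\to\mathbb{R}^n$.

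By the assumption, the restriction of $M'$ to $f^{-1}(N(C))$ is equivalent to the unit tangent bundle $UTf^{-1}(N(C))$. Since the bundle $f^{-1}(N(C))\to C$ is trivial with fiber $F_C$, we have the diffeomorphism $f^{-1}(N(C))\cong C\times F_C = S^{n-1}\times F_C$, and Proposition \ref{prop:4} (\ref{prop:4.2}) gives
\[
T\bigl(S^{n-1}\times F_C\bigr)\cong \pi_1^{\ast}TS^{n-1}\oplus \pi_2^{\ast}TF_C.
\]
Thus it is enough to prove that this $m$-dimensional real vector bundle is trivial, since then $UTf^{-1}(N(C))$, being its unit sphere subbundle, is a trivial $S^{m-1}$-bundle. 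To show triviality I would appeal to Proposition \ref{prop:5}.

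In case (1), when $n=2,4,8$, the sphere $S^{n-1}=S^1,S^3,S^7$ is parallelizable, so $TS^{n-1}$ is a trivial bundle of dimension $n-1$. Set $X_1=S^{n-1}$, $X_2=F_C$, $B_1=TS^{n-1}$ and $B_2=TF_C$ in Proposition \ref{prop:5}: the first hypothesis is satisfied, and the second hypothesis is exactly the assumption that $TF_C$ Whitney-summed with a trivial bundle of dimension $n-1=\dim B_1$ is trivial. In case (2), $TF_C$ is trivial by assumption; taking $X_1=F_C$, $X_2=S^{n-1}$, $B_1=TF_C$ and $B_2=TS^{n-1}$, the first hypothesis of Proposition \ref{prop:5} holds, and the second is immediate since $TS^{n-1}\oplus\varepsilon^{1}$ is trivial (as $S^{n-1}$ embeds in $\mathbb{R}^{n}$ with trivial normal line bundle) and $1\leq m-n+1=\dim B_1$.

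In either case Proposition \ref{prop:5} yields that $T(S^{n-1}\times F_C)$ is trivial, hence $UTf^{-1}(N(C))$ is a trivial $S^{m-1}$-bundle, hence so is the restriction of $M'$ to $f^{-1}(N(C))$. Applying Proposition \ref{prop:2} (equivalently the statement of Proposition \ref{prop:6} specialized to the unit tangent bundle), the P-operation by $S^{m-1}$ produces a locally $C^\infty$ trivial round fold map $f':M'\to\mathbb{R}^n$, finishing the proof. The only non-routine point is the appeal to the classical parallelizability of $S^{1},S^{3},S^{7}$ in case (1); everything else is a direct bookkeeping verification of the hypotheses of Proposition \ref{prop:5} and Proposition \ref{prop:2}.
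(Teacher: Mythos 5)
Your proposal is correct and follows essentially the same route as the paper: both reduce to showing $Tf^{-1}(N(C))\cong T(S^{n-1}\times F_C)$ is trivial via the splitting of Proposition \ref{prop:4} (\ref{prop:4.2}) and the triviality criterion of Proposition \ref{prop:5} (packaged in the paper as Proposition \ref{prop:6}), using parallelizability of $S^1,S^3,S^7$ in case (1) and stable parallelizability of all spheres in case (2), before invoking Proposition \ref{prop:2} for the P-operation. Your write-up merely makes explicit the choices of $X_1,X_2,B_1,B_2$ that the paper leaves implicit.
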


\begin{proof}
For any positive integer $k$, the tangent bundle $TS^k$ of the sphere $S^k$ is stably parallelizable, or the Whitney sum of the bundle $TS^k$ and a trivial real vector
 bundle of dimension $1$ over $S^k$ is trivial. Moreover, for $k=1,3,7$, the tangent bundle is trivial. By virtue of Proposition \ref{prop:6}, in the situation
 of this theorem, the tangent
 bundle $Tf^{-1}(N(C))$ and the unit tangent bundle $UTf^{-1}(N(C))$ is trivial.
 From Proposition \ref{prop:2}, we have a round fold map $f^{\prime}:M^{\prime} \rightarrow {\mathbb{R}}^n$ by applying a P-operation.
\end{proof}

\begin{Ex}
\label{ex:6}
\begin{enumerate}
\item
\label{ex:6.1}
 We can apply Theorem \ref{thm:5} to the map $f:M \rightarrow {\mathbb{R}}^n$ in Proposition \ref{prop:1} to
 construct a round fold map on the total space $UTM$ of the unit tangent bundle of $M$. In
 the situation of Proposition \ref{prop:1}, let the integers $m$ and $n$ be even. In this situation, the Euler
 class of the tangent bundle $TM$ and the unit tangent bundle $UTM$ of $M$ is $4l$ times a generator of
 the cohomology group $H^{m}(M;\mathbb{Z}) \cong \mathbb{Z}$ and we have $H^{m}(UTM;\mathbb{Z}) \cong \mathbb{Z}/4l\mathbb{Z}$. 
In the case where $(m,n)=(4,2)$ is assumed, we can obtain such maps also by applying Theorem \ref{thm:1}. See also Example \ref{ex:2} (\ref{ex:2.1}). 
\item
\label{ex:6.2}
 Let $m,n \in \mathbb{N}$ and let $n \geq 2$. Let $M$ be a closed manifold of dimension $m$ admitting a locally $C^{\infty}$ trivial round
 fold map $f:M \rightarrow {\mathbb{R}}^n$. Assume that one of the following three holds.
\begin{enumerate}
\item $m=n,n+1$.
\item $m=n+2$ and for any connected component $C$ and a small closed tubular neighborhood $N(C)$ as in Definition \ref{def:3}, the tangent bundle of the normal fiber $F_C$ corresponding to the bundle $f^{-1}(N(C))$ is orientable.
\item $m=n+3$ and for any connected component $C$ and a small closed tubular neighborhood $N(C)$ as in Definition \ref{def:3}, the tangent bundle
 of the normal fiber $F_C$ corresponding to the bundle $f^{-1}(N(C))$ is spin.
\end{enumerate}
In this case, for any connected component $C$ and a small closed tubular neighborhood $N(C)$ as in Definition \ref{def:3}, the tangent bundle
 of the normal fiber $F_C$ corresponding to the bundle $f^{-1}(N(C))$ is always trivial. We can apply Theorem \ref{thm:5} to
 the map $f:M \rightarrow {\mathbb{R}}^n$ to
 construct a round fold map on the total space $UTM$ of the unit tangent bundle of $M$.
\end{enumerate}
\end{Ex}

We also have the following.

\begin{Thm}
\label{thm:6}
Let $M$ be a closed manifold of dimension $2 \leq m \leq 4$, let $f:M \rightarrow {\mathbb{R}}^n$ {\rm (}$m \geq n \geq 2${\rm )} be
 a locally $C^{\infty}$ trivial round
 fold map. Let $M^{\prime}$ be a manifold regarded as the total space of the subbundle of a normal bundle obtained by considering
 an immersion of the manifold $M$ into an Euclidean space of codimension $k>2$ whose fiber is the unit sphere
 of dimension $k-1$.
Assume that for any connected component $C$ and a small closed tubular neighborhood $N(C)$ as in Definition \ref{def:3}, the normal fiber $F_C$ corresponding
 to the bundle $f^{-1}(N(C))$ is orientable. Then, on the manifold $M^{\prime}$, by
 a P-operation to the original map $f$, we can obtain a round fold map into ${\mathbb{R}}^n$.
\end{Thm}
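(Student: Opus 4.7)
The plan is to reduce, via Proposition \ref{prop:2}, to showing that for each connected component $C$ of $f(S(f))$ and small closed tubular neighborhood $N(C)$ as in Definition \ref{def:3}, the restriction of the standard linear bundle $M^{\prime} \to M$ to $f^{-1}(N(C))$ is trivial. Since $f$ is locally $C^{\infty}$ trivial, $f^{-1}(N(C))$ is diffeomorphic to $C \times F_C = S^{n-1} \times F_C$, an $m$-dimensional manifold with boundary. As $F_C$ is simple homotopy equivalent to an $(m-n)$-dimensional CW-complex, the base $f^{-1}(N(C))$ is simple homotopy equivalent to a CW-complex of dimension at most $m-1 \leq 3$. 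Because the fiber of $M^{\prime}$ is $S^{k-1}$ with $k>2$, the structure group is $SO(k)$ with $k>2$, placing us in the setting of Proposition \ref{prop:3} (\ref{prop:3.4}); I therefore only need to verify that the restricted bundle is orientable (i.e., has vanishing first Stiefel-Whitney class) and has vanishing second Stiefel-Whitney class.

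Next, I would compute these Stiefel-Whitney classes from the defining immersion. Let $\nu$ be the normal bundle of the given immersion of $M$ into ${\mathbb{R}}^{m+k}$, so that $M^{\prime}$ is the unit sphere bundle of $\nu$ and $TM \oplus \nu$ is a trivial bundle. Multiplicativity of the total Stiefel-Whitney class modulo $2$ then yields $w_1(\nu) = w_1(TM)$ and $w_2(\nu) = w_1(TM)^2 + w_2(TM)$. Restricting to $f^{-1}(N(C)) \cong S^{n-1} \times F_C$, Proposition \ref{prop:4} (\ref{prop:4.2}) splits the tangent bundle as the pull-back Whitney sum of $TS^{n-1}$ and $TF_C$; since $TS^{n-1}$ is stably parallelizable, its total Stiefel-Whitney class is trivial, so the total Stiefel-Whitney class of $TM$ restricted to $f^{-1}(N(C))$ is the pull-back of that of $TF_C$. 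The hypothesis that $F_C$ is orientable then gives $w_1(TF_C) = 0$, whence the first Stiefel-Whitney class of the restriction of $\nu$ vanishes and its second Stiefel-Whitney class equals the pull-back of $w_2(TF_C)$.

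The hardest point I foresee is verifying $w_2(TF_C) = 0$, which requires a short case analysis on $\dim F_C = m - n + 1 \in \{1, 2, 3\}$. If $\dim F_C \leq 2$, then $F_C$ is a compact orientable surface with non-empty boundary (coming from the subbundle structure over the components $C_1$ and $C_2$ of $\partial N(C)$ in Definition \ref{def:3}), hence homotopy equivalent to a wedge of circles or a point; thus $H^2(F_C;\mathbb{Z}/2\mathbb{Z}) = 0$ and $w_2$ vanishes automatically. If $\dim F_C = 3$, then $F_C$ is a compact orientable $3$-manifold; doubling across $\partial F_C$ produces a closed orientable $3$-manifold, which is parallelizable by Stiefel's theorem, and $TF_C$ is the restriction of its tangent bundle, hence trivial, so $w_2$ again vanishes. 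With $w_1$ and $w_2$ of the restriction of $M^{\prime}$ to $f^{-1}(N(C))$ thus verified to vanish, Proposition \ref{prop:3} (\ref{prop:3.4}) furnishes the triviality of this restricted bundle, and Proposition \ref{prop:2} then produces the desired round fold map on $M^{\prime}$ by a P-operation.
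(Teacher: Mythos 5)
Your proposal is correct and follows essentially the same route as the paper: reduce via Proposition \ref{prop:2} to triviality of the restriction over $f^{-1}(N(C))$, use the stable triviality of $TM \oplus \nu$ together with the splitting of $Tf^{-1}(N(C))$ and the low dimension of $F_C$ to kill the first and second Stiefel--Whitney classes, and conclude with Proposition \ref{prop:3} (\ref{prop:3.4}). The only cosmetic difference is that the paper asserts outright that $TF_C$ and hence $Tf^{-1}(N(C))$ are trivial (citing Example \ref{ex:6} (\ref{ex:6.2}) and Proposition \ref{prop:6}), whereas you verify the needed vanishing of $w_1$ and $w_2$ by an explicit Whitney-sum computation and case analysis on $\dim F_C$.
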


\begin{proof}
As discussed in Example \ref{ex:6} (\ref{ex:6.2}), the tangent bundle
 of the normal fiber $F_C$ corresponding to the bundle $f^{-1}(N(C))$ is always trivial and the tangent bundle $Tf^{-1}(N(C))$ of the
 manifold $f^{-1}(N(C))$ is also trivial by virtue of Proposition \ref{prop:6}. From this fact, the restriction of the bundle $M^{\prime}$
 over $M$ to $f^{-1}(N(C))$ is orientable and spin. By Proposition \ref{prop:3} (\ref{prop:3.4}), the obtained bundle
 over $f^{-1}(N(C))$ is trivial. We can apply Proposition \ref{prop:2} and this completes the proof.
\end{proof}

Last, we prove two theorems. Before that, we define a {\it trivial} embedding of a standard sphere into a manifold as
 an embedding of the sphere into the interior of the latter manifold which is smoothly isotopic to an unknot in the interior of a standadrd closed
 disc embedded in the interior of the manifold.  

\begin{Thm}
\label{thm:7}
Let $m,n \in \mathbb{N}$ satisfying $m \geq n \geq 2$. Let $M$ be a closed connected manifold of dimension $m$ and let $f:M \rightarrow {\mathbb{R}}^n$ be
 a locally $C^{\infty}$ trivial round
 fold map. Let $P$ be a proper core of $f$ and
 we define
 a compact manifold $\bar{M}$ of dimension $m$ as the union
 of $f^{-1}({\mathbb{R}}^n-{\rm Int} P)$ and some connected components of the manifold $f^{-1}(P)$. Assume also that $f(\bar{M})$ is
 diffeomorphic to $D^n$; in other words, $f(M)$ must be diffeomorphic to $D^n$ and $\bar{M}$ must include at least
 one connected component of $f^{-1}(P)$. 
 
Let $M^{\prime}$ be
 a manifold regarded as the total space of a bundle over $M$ whose fiber is
 a closed connected manifold $F \neq \emptyset$ such that the restriction of the bundle to the previous
 manifold $\bar{M}$ is a trivial bundle. Assume also that {\rm (}the embedding of{\rm )} the inverse image $f^{-1}(\partial f(M))$ of the
 boundary $\partial f(M)$ is a trivial embedding into $\bar{M}$. Then by a P-operation by $F$ to the map $f$, we can obtain a round
 fold map $f^{\prime}:M^{\prime} \rightarrow {\mathbb{R}}^n$ such that {\rm (}the embedding of{\rm )} the inverse image ${f^{\prime}}^{-1}(\partial f^{\prime}(M^{\prime}))$ of the
 boundary $\partial f^{\prime}(M^{\prime})$ is a trivial embedding into $M^{\prime}$. 
\end{Thm}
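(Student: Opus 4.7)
The plan is to apply the P-operation of Proposition \ref{prop:2} and then control, by a careful choice of the outermost Morse function, the shape of $f'^{-1}(\partial f'(M'))$ inside the trivial bundle $M'|_{\bar M} \cong \bar M \times F$.

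First, I verify the P-operation applies. For every connected component $C$ of $f(S(f))$, a small closed tubular neighborhood $N(C)$ as in Definition \ref{def:3} lies in $\mathbb{R}^n - \mathrm{Int}\, P$, so $f^{-1}(N(C)) \subset f^{-1}(\mathbb{R}^n - \mathrm{Int}\, P) \subset \bar M$. Since the bundle $M' \to M$ is trivial over $\bar M$ by hypothesis, it is trivial over each $f^{-1}(N(C))$, and Proposition \ref{prop:2} furnishes a round fold map $f': M' \to \mathbb{R}^n$.

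Next, I identify the outermost piece of $f$, after replacing $f$ by a $C^{\infty}$-equivalent normal form. Because $f(\bar M) \cong D^n$, the sphere $\partial f(M)$ equals the outermost singular value sphere of $f$; by the embedding property of $f|_{S(f)}$ in Definition \ref{def:1}, the preimage $f^{-1}(\partial f(M))$ is a single standard $(n-1)$-sphere. Writing the outermost annular preimage in the trivialization as $f^{-1}(P_l) \cong S^{n-1} \times E_l$ with $f$ taking the form $(\theta, x) \mapsto (\theta, \rho(x))$ for a smooth map $\rho : E_l \to [l - 1/2, l]$ constant on $\partial E_l$, the uniqueness and connectedness of the outermost singular sphere force $\rho$ to have a single critical point attained as the global maximum; hence $E_l \cong D^{m-n+1}$ and the outermost singular set $f^{-1}(\partial f(M))$ is exactly $S^{n-1} \times \{0\}$.

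In the construction of $f'$, I then choose the outermost Morse function $\tilde f_l : D^{m-n+1} \times F \to [l - 1/2, l]$ so that, in addition to satisfying the boundary conditions in the proof of Proposition \ref{prop:2}, it attains its unique global maximum at a prescribed point $(0, p_0)$ for some $p_0 \in F$. Explicitly, given a Morse function $g : F \to \mathbb{R}$ with unique maximum at $p_0$ and a bump function $\phi : D^{m-n+1} \to [0,1]$ equal to $1$ near $0$ and vanishing in a collar of $\partial D^{m-n+1}$, the function $\tilde f_l(x, p) := \rho(x) + \varepsilon \phi(x)(g(p) - g(p_0))$ is Morse with distinct critical values for generic sufficiently small $\varepsilon > 0$, respects the boundary conditions, and attains its maximum only at $(0, p_0)$. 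Consequently $f'^{-1}(\partial f'(M')) = S^{n-1} \times \{(0, p_0)\}$, and under the trivialization $M'|_{\bar M} \cong \bar M \times F$ this equals $f^{-1}(\partial f(M)) \times \{p_0\}$.

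Finally, the hypothesis gives a standard closed $m$-disc $D^m \subset \bar M$ in which $f^{-1}(\partial f(M))$ is smoothly isotopic to an unknot. For a small closed disc $D_{p_0} \subset F$ around $p_0$, the product $D^m \times D_{p_0}$ is a standard closed $(m + \dim F)$-disc embedded in $\bar M \times F \cong M'|_{\bar M} \subset M'$ containing the sphere $f^{-1}(\partial f(M)) \times \{p_0\}$ as a slice; extending the unknotting isotopy in $D^m$ trivially across $D_{p_0}$ yields the required ambient isotopy in $M'$. The only genuine technical subtlety is the Morse-theoretic fine-tuning of $\tilde f_l$ to pin its maximum at a single fiber point, which the bump perturbation above handles.
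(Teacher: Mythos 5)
Your proposal is correct and takes essentially the same route as the paper: apply the P-operation of Proposition \ref{prop:2} compatibly with the given trivialization of $M^{\prime}$ over $\bar{M}$, arrange that ${f^{\prime}}^{-1}(\partial f^{\prime}(M^{\prime}))$ sits as $f^{-1}(\partial f(M)) \times \{p_0\}$ inside $\bar{M} \times F$, and then transport the unknotting isotopy of $f^{-1}(\partial f(M))$ through a product disc $D^m \times D_{p_0}$. The only difference is one of emphasis: the paper secures the product position by choosing the gluing isomorphisms ${\Phi}_k$ as products of the base identifications with ${\rm id}_F$, whereas you secure it by explicitly pinning the unique maximum of the outermost Morse function at a single fiber point --- two complementary ways of ensuring the same thing.
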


\begin{proof}
From Proposition \ref{prop:2} and the assumption that the restriction of the bundle $M$ to $\bar{M} \supset f^{-1}({\mathbb{R}}^n-{\rm Int} P)$ is
 trivial, we can construct a locally $C^{\infty}$ trivial round fold map $f^{\prime}:M^{\prime} \rightarrow {\mathbb{R}}^n$ by a P-operation by $F$ to the map $f$. To show that we can construct such a map satisfying
 the additional property, we study the structure of obtained map by noticing the definition of a P-operation or the construction demonstrated
 in the proof of Proposition \ref{prop:2}. We abuse notation and terminologies in the proof of Proposition \ref{prop:2}.
 
We may regard the given map $f$ as a normal form. By the definition of a P-operation and the mentioned bundle structure of the restriction of the bundle $M^{\prime}$ over $M$ to $\bar{M}$, in the proof
 of Proposition \ref{prop:2}, we can choose the bundle isomorphism ${\Phi}_k$ ($k \neq 1$) as the product of the identification
 map ${\phi}_k$ from $f^{-1}(\partial {D^n}_{k-\frac{1}{2}}) \subset f^{-1}(P_k)$
 onto $f^{-1}(\partial {D^n}_{k-\frac{1}{2}}) \subset f^{-1}(P_{k-1})$ and the identity map ${\rm id}_{F}$. We can
 choose the bundle isomorphism ${\Phi}_1$ so that its restriction to the restriction of
 the bundle $M^{\prime}$ over $M$ to $(\partial f^{-1}({D^n}_{\frac{1}{2}})) \bigcap {\rm Int} \bar{M} \subset \partial f^{-1}({D^n}_{\frac{1}{2}}) \subset f^{-1}(P_1)$ 
 is the product of the identification
 map between the resulting base spaces and the identity map ${\rm id}_{F}$.
Moreover, (the embedding of) the inverse image $f^{-1}(\partial f(M))$ of the
 boundary $\partial f(M)$ into $\bar{M}$ is assumed to be trivial. By virtue of these facts, we can
 construct the map $f^{\prime}$ so that {\rm (}the embedding of{\rm )} the
 inverse image ${f^{\prime}}^{-1}(\partial f^{\prime}(M))$ of the
 boundary $\partial f^{\prime}(M^{\prime})$ into $\bar{M} \times F$ is smoothly isotopic to the restriction of the section of the
 trivial bundle $\bar{M} \times F$ over $\bar{M}$ to $f^{-1}(\partial f(M)) \subset \bar{M}$. The image of this restriction
 map is regarded as $f^{-1}(\partial f(M)) \times \{p\} \subset \bar{M} \times F$
 where $p$ is a point in $F$. Hence, {\rm (}the embedding of{\rm )} the
 inverse image ${f^{\prime}}^{-1}(\partial f^{\prime}(M))$ of the
 boundary $\partial f^{\prime}(M^{\prime})$ into $\bar{M} \times F$ is a trivial embedding into the total
 space $\bar{M} \times F$ of a resulting trivial bundle
 and $M^{\prime}$. This completes the proof.
\end{proof}

\begin{Ex}
\label{ex:7}
Maps in Proposition \ref{prop:1} satisfy the assumption of Theorem \ref{thm:7}. In \cite[EXAMPLE 2]{kitazawa3}, by P-operations by $S^1$ to a map $f$ in
 Proposition \ref{prop:1} the author
 obtained a lot of round fold maps and source manifolds under the assumption that $n=2$ and $m-n \geq 3$ hold (see also \cite[THEOREM 4]{kitazawa3}). By virtue of Theorem \ref{thm:7}, by such a P-operation by $S^1$ to the map $f$, we can obtain a round
 fold map $f^{\prime}:M^{\prime} \rightarrow {\mathbb{R}}^n$ such that {\rm (}the embedding of{\rm )} the
 inverse image ${f^{\prime}}^{-1}(\partial f^{\prime}(M^{\prime}))$ of the
 boundary $\partial f^{\prime}(M^{\prime})$ is a trivial embedding into $M^{\prime}$. More generally, if $M^{\prime}$ is
 regarded as the total space of a bundle whose structure group is $SO(2)$, we can perform the construction similarly.    
\end{Ex}

On the other hand, we also have the following theorem.

\begin{Thm}
\label{thm:8}
Let $m,n \in \mathbb{N}$ satisfying $m \geq n \geq 2$. Let $M$ be a closed connected manifold of dimension $m$ and let $f:M \rightarrow {\mathbb{R}}^n$ be
 a locally $C^{\infty}$ trivial round
 fold map. Let $C_0$ be the connected component of the boundary $\partial f(M)$ of the image $f(M)$ bounding
 the unbounded connected component of the set ${\mathbb{R}}^n-{\rm Int} f(M)$ and assume also that {\rm (}the embedding of{\rm )} the inverse
 image $f^{-1}(C_0)$ of the
 component $C_0$ into $M$ is not null-homotopic. 

 Furthermore, let $F \neq \emptyset$ be a connected manifold such that the group ${\pi}_{n-1}(F)$ is zero and let $M^{\prime}$ be a
 manifold regarded as the total space of an $F$-bundle such that for any connected component $C$ of the singular
 value set $f(S(f))$ and a small closed tubular neighborhood $N(C)$ as in Definition \ref{def:3}, the restriction of the bundle to the
 space $f^{-1}(N(C))$ is trivial and that the homomorphism from ${\pi}_{n-2}(F)$ into ${\pi}_{n-2}(M^{\prime})$ induced by the natural inclusion $i$ is injective.

Then by a P-operation by $F$ to the map $f$, we can obtain a round
 fold map $f^{\prime}:M^{\prime} \rightarrow {\mathbb{R}}^n$. Furthermore, for the connected component ${C_0}^{\prime}$ of the boundary $\partial f^{\prime}(M^{\prime})$ of
 the image $f(M^{\prime})$ bounding
 the unbounded connected component of the set ${\mathbb{R}}^n-{\rm Int} f^{\prime}(M^{\prime})$ such that {\rm (}the embedding of{\rm )} the
 inverse image ${f^{\prime}}^{-1}({C_0}^{\prime})$ of the
 component ${C_0}^{\prime}$ into $M^{\prime}$ is also null-homotopic. 
\end{Thm}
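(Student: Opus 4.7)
The plan is to invoke Proposition~\ref{prop:2} for the existence of $f'$ and then to locate the new outer singular sphere inside $M'$ so as to track its homotopy class through the bundle projection $\pi \colon M' \to M$.

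Since the $F$-bundle $M' \to M$ is trivial on $f^{-1}(N(C))$ for every connected component $C$ of the singular value set, Proposition~\ref{prop:2} yields a locally $C^{\infty}$ trivial round fold map $f' \colon M' \to \mathbb{R}^n$ by a P-operation by $F$. The hypothesis that $C_0$ bounds the unbounded component of $\mathbb{R}^n \setminus \mathrm{Int}\,f(M)$ forces the outermost fold of $f$ to be definite; hence, in the decomposition used in the proof of Proposition~\ref{prop:2}, the outermost piece $f^{-1}(P_l)$ is a trivial bundle over $C_0 \cong S^{n-1}$ with fiber $E_l$ a closed $(m-n+1)$-disc, and $f^{-1}(C_0) \hookrightarrow M$ appears as $S^{n-1} \times \{0\}$ in this trivialization.

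Following the Morse-theoretic step of the proof of Proposition~\ref{prop:2}, the new outer singular sphere sits inside the outermost piece $f^{-1}(P_l) \times F \cong S^{n-1} \times E_l \times F$ as an embedding $\sigma \colon S^{n-1} \times \{q\} \hookrightarrow M'$, where $q \in \mathrm{Int}(E_l) \times F$ is the unique maximum of the chosen Morse function $\tilde f_l$. Consequently $\pi \circ \sigma$ lies in $S^{n-1} \times E_l \subset M$, and since $E_l$ is contractible, $\pi \circ \sigma$ is homotopic in $M$ to $f^{-1}(C_0) \hookrightarrow M$, which by hypothesis represents a non-zero class in $\pi_{n-1}(M)$. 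To upgrade this to non-triviality of $[\sigma] \in \pi_{n-1}(M')$, I would invoke the homotopy long exact sequence of the fibration $F \to M' \xrightarrow{\pi} M$,
\[
\pi_{n-1}(F) \to \pi_{n-1}(M') \xrightarrow{\pi_*} \pi_{n-1}(M) \to \pi_{n-2}(F) \xrightarrow{i_*} \pi_{n-2}(M').
\]
The hypotheses $\pi_{n-1}(F)=0$ and injectivity of $i_*$ make $\pi_*$ injective, so $\pi_*[\sigma] = [\pi \circ \sigma] \neq 0$ forces $[\sigma] \neq 0$, as desired.

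The main obstacle is the middle step: confirming that the new outer singular sphere really takes the clean form $S^{n-1} \times \{q\}$ with $\pi \circ \sigma$ homotopic to $f^{-1}(C_0) \hookrightarrow M$. This is a direct but careful reading of the Morse-theoretic piece of the P-operation construction in Proposition~\ref{prop:2}; once the outermost fold has been reduced to the definite case and the chosen Morse function on $E_l \times F$ is examined, the verification is routine.
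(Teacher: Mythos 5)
Your proposal is correct and follows essentially the same route as the paper: invoke Proposition \ref{prop:2} for the existence of $f^{\prime}$, observe that the hypotheses ${\pi}_{n-1}(F)=0$ and the injectivity of $i_{\ast}$ make ${\pi}_{\ast}:{\pi}_{n-1}(M^{\prime}) \rightarrow {\pi}_{n-1}(M)$ an isomorphism via the homotopy exact sequence of the fibration, and conclude that the new outer singular sphere cannot be null-homotopic (the paper's stated conclusion ``also null-homotopic'' is evidently a typo for ``also not null-homotopic''). In fact you supply more detail than the paper, which leaves the identification of ${f^{\prime}}^{-1}({C_0}^{\prime})$ with $S^{n-1} \times \{q\}$ and the homotopy $\pi \circ \sigma \simeq f^{-1}(C_0)$ implicit behind the phrase ``we immediately have the result.''
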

\begin{proof}
We have the following homotopy exact sequence

$$
\begin{CD}
@>   >> {\pi}_{n-1}(F) \cong \{0\} @> >>  {\pi}_{n-1}(M^{\prime}) @>  >> {\pi}_{n-1}(M) \\ @> >> {\pi}_{n-2}(F) @> {i}_{\ast} >> {\pi}^{n-2}(M^{\prime}) @> >>. 
\end{CD}
$$

Since the last homomorphism is assumed to be injective, The homomorphism ${\pi}_{n-1}(M^{\prime};\mathbb{Z})$ into ${\pi}_{n-1}(M)$ is an isomorphism.

From this, we immediately have the result.

\end{proof}

\begin{Ex}
\label{ex:8}
We review the construction of the map presented in the former part of Example \ref{ex:1} (\ref{ex:1.2}) done in \cite{kitazawa} and \cite{kitazawa4} in
 the case where the source manifold $M$ is a manifold regarded as the total space of an $S^1$-bundle over $S^2$ which is not homeomorphic to $S^3$. 
 In these proofs, essentially, P-operations are used. 

More precisely, in the proof of Proposition \ref{prop:2}, set $l=1$ or consider a P-operation
 to a map presented in Example \ref{ex:1} (\ref{ex:1.1}) from $S^2$ into the plane. In the last step, we need to take the
 diffeomorphism ${\Phi}_1$ used in the proof appropriately. ${\Phi}_1$ is regarded as a bundle isomorphism between two
 trivial $S^1 \sqcup S^1$-bundles over $S^1$. By considering the structure of the $S^1$-bundle over $S^2$ and well-known
 facts on the diffeomorphism group of $S^1 \times S^1$, we can construct a round fold map $f:M \rightarrow {\mathbb{R}}^2$ so
 that the inverse image $f^{-1}(\partial f(M))$ of the boundary $\partial f(M)$ of the image $f(M)$ is smoothly
 isotopic to the fiber of a point in a proper core of $f$ and that the fiber of the point is also regarded as
 a fiber of the $S^1$-bundle $M$ over $S^2$.

As a result, we have a round fold map $f:M \rightarrow {\mathbb{R}}^2$ satisfying the assumption
 of Theorem \ref{thm:8}. Furthermore, for example, set $F=S^k$ for $k \geq 2$, we can construct a desired
 round fold map into the plane on any manifold $M^{\prime}$ regarded
 as the total space of an linear $F$-bundle over $M$ which is orientable. 
\end{Ex}

\end{document}